\documentclass[11pt]{article}
\usepackage{amssymb,amsmath}
\usepackage{mathrsfs}
\usepackage{mathrsfs}
\usepackage{amsfonts}
\usepackage{mathrsfs}
\usepackage{graphicx}
\usepackage{bbm}
\usepackage{amsmath,amsthm,amssymb,amscd}
\usepackage{latexsym}
\usepackage[numbers,sort&compress]{natbib}
\usepackage[all]{xy}
\usepackage{amssymb}
\usepackage{geometry}
\usepackage{algorithm}
\usepackage{algorithmic}
\newtheorem{lemma}{Lemma}[section]
\newtheorem{theorem}{Theorem}[section]
\newtheorem{definition}{Definition}[section]
\newtheorem{problem}{Problem}[section]

\newtheorem{example}{Example}[section]
\newtheorem{remark}{Remark}[section]
\newtheorem{corollary}{Corollary}[section]

\begin{document}
\date{}
\title{{Smith Form Equivalence for Several Classes of Multivariate Polynomial Matrices}
{\thanks{This research was supported by the National Natural Science Foundation of China under Grant Nos.12271154, 12371507 and 12401662.
$\dagger$Corresponding author: Dongmei Li, Email: dmli@hnust.edu.cn.}}
{\author{\small Zuo Chen$^1$  \quad Jiancheng Guan$^2$  \quad  Dongmei Li$^{2,\dagger}$\\
\small \, \, 1\ School of Computer Science and Engineering, Hunan
University of   Science and\\  \small Technology,
 Xiangtan, Hunan, China, 411201\\
 \small \, \, 2\ School of Mathematics and Statistics, Hunan
University of   Science and\\  \small Technology,
 Xiangtan, Hunan, China, 411201
 }}
}
\maketitle

{\bf Abstract}
\quad  
This paper investigates the equivalence reduction for several classes of multivariate polynomial matrices and their Smith forms, establishing some criteria for such reduction.
In particular, we employ algebra isomorphisms as a key tool to study this equivalence problem. We then leverage the Quillen-Suslin and Lin-Bose theorems to extend these results to non-square and rank-deficient matrices. Moreover, the verification of our criteria is achievable algorithmically via existing Gr\"{o}bner basis methods.

\vspace{0.2cm}
\noindent 
{\bf Keywords:} Multivariate polynomial matrix; Smith form;  Matrix equivalence.

\section{Introduction}
Multidimensional ($nD$) systems, such as iterative learning control systems and $nD$ finite-impulse response filter banks, are widely employed in engineering fields like signal processing and dynamic control. Since these systems are represented by multivariate polynomial matrices, the key properties of $nD$ systems including controllability, stability, and solvability can be directly derived from the algebraic characteristics of the corresponding polynomial matrices,   \cite{1,2,3,5,2025,53,65}.
A critical approach in analyzing a given $nD$ system  is to reduce the corresponding multivariate polynomial matrix to its Smith form. This form contains fewer equations and unknowns than the original system while preserving essential structural information. Symbolic computation techniques, such as Euclidean division and Gr\"{o}bner bases, are robust tools for investigating the equivalence and reduction of multivariate polynomial matrices.  

For univariate polynomial matrices, the equivalence problem to their Smith forms has been fully resolved \cite{33,34}. In fact, every univariate polynomial matrix is equivalent to its Smith form. 
However, this equivalence does not hold universally for multivariate polynomial matrices (where the number of variables \(n \geq 2\)), because multivariate polynomial rings are not principal ideal domains and lack the Euclidean division property.
There is a lack of mature theoretical frameworks for this equivalence problem. Establishing it may be a long-standing open challenge in $nD$ system theory.  
In recent decades, researchers have made significant progress on this challenge for some classes of multivariate polynomial matrices. Frost and Storey were among the first to investigate the equivalence of bivariate polynomial matrices to their Smith forms, and they obtained some preliminary results \cite{32,41}.

The Quillen-Suslin theorem \cite{35, 36}, originating from the famous Serre's conjecture \cite{360}, states that a matrix \( F \in K[z_1,z_2,\ldots,z_n]^{l \times m} \), if the \( l \times l \) minors generate the unit ideal, then \( F \) is equivalent to its Smith form \( (I_l\  0_{l, m-l}) \).
In 2006, Lin et al. \cite{11} leveraged the Quillen-Suslin theorem to prove that a square multivariate polynomial matrix \(F\) with \(\det(F) = z_1 - f(z_2, \ldots, z_n)\) is equivalent to its Smith form $diag\{1,\ldots,1,\det(F)\}$. 
Subsequently, Li et al. \cite{13,12} and Lu et al. \cite{171,172} conducted a series of studies on the equivalence between matrices satisfying \(\det(F) = (z_1 - f(z_2, \ldots, z_n))^q\) (where $q$ is a positive integer) and their special types of Smith forms. Furthermore, Liu and Li \cite{37} established a necessary and sufficient condition for such matrices with \(\det(F) = (z_1 - f(z_2, \ldots, z_n))^q\) to be equivalent to their Smith forms of arbitrary structure, thereby completely solving the equivalence problem for this class of matrices.
In 2023, Zheng et al. \cite{16} derived the executable conditions for reducing \( F \in K[x,y]^{l \times l} \) with \( \det(F) = p^t \) to its Smith form, where \( p \in K[x] \) is irreducible. Subsequently, Guan et al. \cite{49,38} extended the results in \cite{16} to the multivariate case by generalizing Vaserstein's global-local theorem, facilitating the broader application of this class of matrices. Although research progress in the Smith form equivalence of multivariate polynomial matrices has been achieved, a complete solution remains unknown. 



Recent work by Lu et al. \cite{50} has made significant progress by examining the matrix $F \in {K[z_1,z_2,\ldots,z_n]^{l \times l}}$ with $\det(F) \in K[z_1]$. Through innovative localization techniques, they established that such matrices are equivalent to their Smith forms if and only if the reduced minors of all orders generate the unit ideal.
Building upon this research for polynomial matrices with univariate determinants, we extend the investigation to broader classes of matrices and consider the following problems. 
\begin{problem}\label{pro1}
Let $F  \in {K[z_1,z_2,\ldots,z_n]^{l \times l}}$ and $\det(F) =(z_1-f(z_2,\ldots,z_n))h$, where $h\in K[z_n]$. What is the necessary and sufficient condition for the equivalence of $F$ and its Smith form?
\end{problem}
\begin{problem}\label{pro2}
Let $F  \in {K[z_1,z_2,\ldots,z_n]^{l \times l}}$ and $\det(F) =(z_1-f_1(z_2,\ldots,z_n))\cdots (z_{n-1}-f_{n-1}(z_n))h$, where $h\in K[z_n]$. What is the necessary and sufficient condition for the equivalence of $F$ and its Smith form?
\end{problem}

Leveraging  the determinant structure analyzed in the previous problems, we consider linear polynomial factors of the following form and introduce a homomorphic mapping.
Let \( g_i \in K[z_1,\ldots,z_{n-1}] \) be $n-1$ linear polynomials of the form  
$
g_i = a_{i1}z_1 + a_{i2}z_2 + \ldots + a_{i,n-1}z_{n-1} + b_i,
$  
where \( a_{ij}, b_i \in K \) for \( i = 1,\ldots,n-1 \).
We define a homomorphic mapping $\phi$ of $K[z_1,z_2,\ldots,z_n]$ as follows:
\[
\begin{aligned}
\phi :\quad 
z_1 \mapsto g_1,\quad z_2 \mapsto g_2, \quad
\ldots, \quad
z_{n-1}\mapsto g_{n-1},\quad 
z_n \mapsto z_n.\quad
\end{aligned}
\]
Then we use it to investigate a new class of polynomial matrices and consider the following problem.
\begin{problem}\label{pro3}
Let $F  \in {K[z_1,z_2,\ldots,z_n]^{l \times l}}$ and $\det(F) =g_1 g_2 \cdots g_{n-1} h$, where $h\in K[z_n]$. When is the $F$ equivalent to its Smith form?
\end{problem}

The rest of the paper is organized as follows. In Section 2, we review the fundamental concepts regarding polynomial matrix equivalence and introduce several auxiliary lemmas. In Section 3, we address Problems \ref{pro1} and \ref{pro2} and present the corresponding main theoretical results. In Section 4, we employ a ring isomorphism to tackle Problem \ref{pro3} and extend the equivalence results to a new class of matrices. Finally, in Section 5, we provide the conclusions.

\section{Preliminaries}
Let $K[{z_1},{z_2}, \ldots ,{z_n}]$ be the polynomial ring in variables ${z_1},{z_2}, \ldots ,{z_n}$  with coefficients in the field $K$, 
and let $\overline{K}$ be an algebraically closed field containing $K$. 
$K[{z_1},{z_2}, \ldots ,{z_n}]^{l \times m}$ denotes the set of $l \times m$ matrices with entries from $K[{z_1},{z_2}, \ldots ,{z_n}]$.
${0_{s,t}}$ denotes the $s \times t$ zero matrix, and $I_t$ denotes the $t \times t$ identity matrix. We will simply write $F$ to represent  $F({z_1},{z_2}, \ldots ,{z_n})$ when there is no ambiguity. Let $h_1, \ldots , h_l\in K[{z_1},{z_2}, \ldots ,{z_n}]$,  $diag\{h_1, \ldots , h_l\}$ denotes the diagonal matrix with the $i$-th diagonal entry being $h_i$ for $i = 1, \ldots , l$.

We begin by presenting some necessary definitions and preliminary results.
\begin{definition}
Let $F \in {K[{z_1},{z_2}, \ldots ,{z_n}]^{l \times l}}$. Then $F$ is said to be unimodular if the determinant of $F$ is a unit in $K[{z_1},{z_2}, \ldots ,{z_n}]$.
\end{definition}
\begin{definition}Let $F\in K[{z_1},{z_2}, \ldots ,{z_n}]^{l\times m}$ ($l\leq m$). The Smith  form of $F$ is defined as
 $$S=(diag\{\Phi_i(F)\}\ \ 0_{l, m-l}), $$
where
\begin{equation*}
\Phi_i(F)=\left\{
\begin{aligned}
&d_{i}(F)/d_{i-1}(F),     &&1\leq i\leq r; \\
&0,     &&r<i\leq l,
\end{aligned}
\right.
\end{equation*}
$r$ is the rank of $F$, $d_0(F)\equiv 1$, $d_i(F)$ is the greatest common divisor of the $i\times i$ minors of $F$ and $\Phi_i$ satisfies the divisibility property $\Phi_1\mid\Phi_2\mid\cdots\mid\Phi_r.$
\end{definition}

\begin{definition}(\cite{111})
Let $F \in {K[{z_1},{z_2}, \ldots ,{z_n}]^{l \times m}}$ with rank $r$, where $1 \le r \le min\{l, m\}$. For any given integer $k$ with $1 \le k \le r$, let ${u_{\rm{1}}}, \ldots ,{u_\beta }$ be all the $k \times k$ minors of $F$ and  $d_k(F)$ be the greatest common divisor $(g.c.d.)$ of ${u_{\rm{1}}}, \ldots ,{u_\beta }$. Extracting $d_k(F)$ from ${u_{\rm{1}}}, \ldots ,{u_\beta }$ yields
\[{u_j} = {d_k}(F) \cdot {v_j}, \ \  j = 1, \ldots ,\beta .\]
Then, $v_1, \ldots , v_\beta$ are called the $k \times k$ reduced minors of $F$. In addition, we use $J_k(F)$ to denote the ideal generated by $v_1, \ldots , v_\beta$.
\end{definition}

\begin{definition}(\cite{22})
Let $F \in {K[{z_1},{z_2}, \ldots ,{z_n}]^{l \times m}}$ be of full row (column) rank. Then $F$ is said to be zero left prime (zero right prime) if the $l \times l$\ ($m \times m$) minors of $F$ have no common zeros in $\overline{K}^n$, i.e., generate unit ideal $K[{z_1},{z_2}, \ldots ,{z_n}]$. If $F \in {K[{z_1},{z_2}, \ldots ,{z_n}]^{l \times m}}$ is zero left prime (zero right prime), then $F$ is called simply to be ZLP (ZRP).
\end{definition}

\begin{definition}
Let $F_1,F_2\in K[{z_1},{z_2}, \ldots ,{z_n}]^{l\times m}$. Then $F_1$ and $F_2$ are said to be equivalent if there exist two unimodular matrices $U\in K[{z_1},{z_2}, \ldots ,{z_n}]^{l\times l}$ and $V\in K[{z_1},{z_2}, \ldots ,{z_n}]^{m\times m}$ satisfying $F_1=UF_2V$. This relation is denoted by $F_1 \sim F_2$.
\end{definition}

\begin{lemma}(Quillen-Suslin Theorem, \cite{35,36})\label{le1}
Let $F \in {K[{z_1},{z_2}, \ldots ,{z_n}]^{l \times m}}(l\le m)$ be a ZLP matrix. Then there exists a unimodular matrix $U \in {K[{z_1},{z_2}, \ldots ,{z_n}]^{m \times m}}$ such that $F \cdot U = \left( {\begin{array}{*{20}{c}}
{{I_l}}&{{0_{l,m - l}}}
\end{array}} \right).$
\end{lemma}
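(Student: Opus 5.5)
The statement is the Quillen--Suslin theorem for a ZLP matrix $F\in K[z_1,\ldots,z_n]^{l\times m}$, and I would obtain it from the Quillen--Suslin theorem on projective modules (every finitely generated projective module over $R=K[z_1,\ldots,z_n]$ is free). The plan is to pass from the ZLP condition to a splitting of $R^m$, use freeness of the kernel to build a basis, and read off the unimodular matrix $U$.

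\emph{Step 1: $F$ has a right inverse.} Since the $l\times l$ minors of $F$ generate the unit ideal, choose $a_J\in R$ with $\sum_J a_J\det F_J=1$, where $F_J$ runs over the $l\times l$ column submatrices of $F$. For each $J$, the adjugate placed in the rows indexed by $J$ (and zero elsewhere) gives $G_J\in R^{m\times l}$ with $FG_J=\det(F_J)\,I_l$. Then $G=\sum_J a_J G_J$ satisfies $FG=I_l$.

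\emph{Step 2: splitting and freeness.} View $F$ as the $R$-linear map $R^m\to R^l$, $x\mapsto Fx$. It is surjective, since $F(Gy)=y$, and the map $x\mapsto Gx$ splits it. Hence $R^m=\ker F\oplus \operatorname{im} G$ with $\operatorname{im}G\cong R^l$. Thus $P=\ker F$ is a direct summand of $R^m$, so it is finitely generated and projective. It is also stably free: $P\oplus R^l\cong R^m$. By the Quillen--Suslin theorem, $P$ is free, and its rank is $m-l$ by comparing ranks after tensoring with the fraction field. The substantive input is exactly this freeness; everything else is elementary. If one wishes to avoid the full projective-module statement, stably free modules over $R$ being free, via Suslin's monic-polynomial/local-global argument, is all that is needed.

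\emph{Step 3: assemble $U$.} Let $w_1,\ldots,w_{m-l}$ be a basis of $P$, and set $U=(G\ \ W)\in R^{m\times m}$ with $W=(w_1,\ldots,w_{m-l})$. By the direct sum decomposition of Step 2, the columns of $U$ form a basis of $R^m$, so $U$ is invertible over $R$ and $\det U$ is a unit. Moreover $FU=(FG\ \ FW)=(I_l\ \ 0_{l,m-l})$. This gives the required unimodular $U$, and $U^{-1}$ completes $F$ to a unimodular matrix.

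The main obstacle is Step 2, namely the freeness of the stably free kernel. That is precisely the content of Serre's conjecture, and it cannot be done by elementary manipulation. The other steps are routine linear algebra over commutative rings. For an algorithmic version matching the paper's remark on computability, one would instead cite the constructive Quillen--Suslin algorithms (Logar--Sturmfels, Park), which realise $U$ explicitly.
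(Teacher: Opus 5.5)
Your proof is correct. The adjugate construction gives a right inverse $G$ with $FG=I_l$, which yields the splitting $R^m=\ker F\oplus \mathrm{im}\,G$; freeness of the stably free kernel then gives $U=(G\ \ W)$ with $FU=(I_l\ \ 0_{l,m-l})$. The paper gives no proof of its own and just cites Quillen and Suslin, so your argument is the standard reduction of this matrix form to their theorem, with freeness of $\ker F$ as the only nonelementary input; the unit-ideal form of ZLP you start from matches the paper's no-common-zeros definition by the weak Nullstellensatz.
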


\begin{lemma}\label{le2}(\cite{37})
Let $A, B \in {K[{z_1},{z_2}, \ldots ,{z_n}]^{l \times m}}$. If $A\sim B$, then $d_k(A) = d_k(B)$ and $J_k(A) =J_k(B) $, where $k = 1,2,\ldots ,\min \{ m,l\} $.
\end{lemma}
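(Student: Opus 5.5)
Let $A = UBV$ with $U\in K[z_1,\ldots,z_n]^{l\times l}$ and $V\in K[z_1,\ldots,z_n]^{m\times m}$ unimodular. The plan is to use the Cauchy--Binet formula on each $k\times k$ minor of $A$ and its inverse relation $B=U^{-1}AV^{-1}$. Fix $k$ with $1\le k\le \min\{m,l\}$. For index sets $I\subseteq\{1,\ldots,l\}$ and $J\subseteq\{1,\ldots,m\}$ with $|I|=|J|=k$, Cauchy--Binet gives
\[
A_{I,J}=\sum_{|S|=|T|=k} U_{I,S}\,B_{S,T}\,V_{T,J}.
\]
Hence every $k\times k$ minor of $A$ is a $K[z_1,\ldots,z_n]$-linear combination of the $k\times k$ minors of $B$. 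In particular $d_k(B)\mid d_k(A)$. Since $U^{-1}$ and $V^{-1}$ are polynomial matrices, the symmetric argument gives $d_k(A)\mid d_k(B)$. Thus $d_k(A)$ and $d_k(B)$ are associates, i.e.\ they differ by a nonzero constant of $K$. After the usual normalization of the g.c.d.\ they are equal, and in any case they generate the same principal ideal.

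For the reduced minors, write $A_{I,J}=d_k(A)\,v^A_{I,J}$ and $B_{S,T}=d_k(B)\,v^B_{S,T}$. Using $d_k(A)=c\,d_k(B)$ with $c\in K\setminus\{0\}$, substitute into the Cauchy--Binet identity and cancel the nonzero polynomial $d_k(B)$ in the integral domain $K[z_1,\ldots,z_n]$. This gives
\[
c\,v^A_{I,J}=\sum_{S,T} U_{I,S}\,v^B_{S,T}\,V_{T,J},
\]
so $J_k(A)\subseteq J_k(B)$. The symmetric argument gives the reverse inclusion, hence $J_k(A)=J_k(B)$.

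The only delicate point is the case where all $k\times k$ minors vanish, i.e.\ $k$ exceeds the rank. Equivalence preserves rank, since the rank is the largest $k$ with a nonzero $k\times k$ minor, and the argument above shows nonvanishing transfers in both directions. So in that case both $d_k$ are $0$, and both sets of reduced minors are empty or zero by convention. The statement then holds trivially under whichever convention the definition uses. Apart from this convention issue, the proof is routine and I expect no real obstacle.
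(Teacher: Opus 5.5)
Your Cauchy--Binet argument is correct. This includes passing from the minors to the reduced minors by cancelling the nonzero $d_k(B)$, and noting that equivalence preserves rank, which keeps the range $k>r$ consistent. The paper gives no proof of its own here and simply imports the lemma from Liu and Li; your argument is the standard proof of this invariance.
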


\begin{lemma}\label{le4}(\cite{37})
Let $F,M,N \in {K[{z_1},{z_2}, \ldots ,{z_n}]^{l \times l}}$, $F=M\cdot N$. For some $k(1\le k\le l)$, if ${d_k}(M) = {d_k}(F)$, ${J_k}(F) = K[{z_1},{z_2}, \ldots ,{z_n}]$, then ${J_k}(M) = K[{z_1},{z_2}, \ldots ,{z_n}]$, ${d_k}(N) = 1$, ${J_k}(N) = K[{z_1},{z_2}, \ldots ,{z_n}]$.
\end{lemma}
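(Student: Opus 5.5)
Write $R=K[z_1,\ldots,z_n]$. The plan is to compute the $k\times k$ minors of $M$ and $N$ via the Cauchy--Binet formula and compare them with those of $F$.

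\emph{Step 1: Cauchy--Binet.} For any row index set $I$ and column index set $J$ of size $k$,
\[
\det F_{I,J}=\sum_{|S|=k}\det M_{I,S}\,\det N_{S,J}.
\]
Since $d_k(M)$ divides every $\det M_{I,S}$, it divides every $k\times k$ minor of $F$, so $d_k(M)\mid d_k(F)$ in general. Here $d_k(M)=d_k(F)=:d$. Write the reduced minors as $\det M_{I,S}=d\,m_{I,S}$ and $\det F_{I,J}=d\,f_{I,J}$. Dividing the formula by $d$ (legitimate because $R$ is a domain) gives
\[
f_{I,J}=\sum_S m_{I,S}\,\det N_{S,J}.
\]

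\emph{Step 2: $J_k(M)$ is the unit ideal.} By Step 1, every generator $f_{I,J}$ of $J_k(F)$ lies in the ideal generated by the $m_{I,S}$, that is, in $J_k(M)$. Hence $R=J_k(F)\subseteq J_k(M)$, so $J_k(M)=R$.

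\emph{Step 3: $N$ has $d_k(N)=1$ and $J_k(N)=R$.} By the same relation, every $f_{I,J}$ lies in the ideal generated by the $k\times k$ minors $\det N_{S,J}$ of $N$. Since the $f_{I,J}$ generate $R$, the $k\times k$ minors of $N$ themselves generate the unit ideal. This has two consequences:
\begin{itemize}
\item their g.c.d. $d_k(N)$ divides $1$, so it is a unit, which we normalize to $1$;
\item the reduced minors coincide with the minors, so $J_k(N)=R$.
\end{itemize}

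\emph{Points to check.} The main subtlety is the bookkeeping that makes $d_k(M)=d_k(F)$ the exact hypothesis needed to cancel $d$ in Step 1. We must also note that $F$ has rank at least $k$. This follows because $J_k(F)=R$ forces some $k\times k$ minor of $F$ to be nonzero. Consequently $M$ and $N$ also have rank at least $k$, so all the $d_k$ and $J_k$ are well defined.
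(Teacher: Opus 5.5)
Your proof is correct. Dividing the Cauchy--Binet identity by $d=d_k(M)=d_k(F)$ is legitimate: $d\neq 0$ because $J_k(F)$ is only defined, and equal to $K[z_1,\ldots,z_n]$, when $F$ has rank at least $k$. After the division, each reduced $k\times k$ minor of $F$ is an $R$-combination of the reduced minors of $M$, and also of the ordinary minors of $N$, which yields all three conclusions at once.

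The paper does not prove this lemma itself; it imports it from \cite{37}. So there is no in-paper argument to compare against, and your self-contained Cauchy--Binet proof fills that role completely.
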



\begin{lemma}\label{le5}(\cite{38})
Let $F \in {K[z_1,{z_2}, \ldots ,{z_n}]^{l \times l}}$ and $p \in K[{z_1}]$ be an irreducible polynomial. If ${J_k}(F) = K[z_1,{z_2}, \ldots ,{z_n}]$ and $p \mid d_{k+1}(F)$, then there exists a unimodular matrix $U \in {K[z_1,{z_2}, \ldots ,{z_n}]^{l \times l}}$ such that
\[U \cdot F = \left( {\begin{array}{*{20}{c}}
{{I_k}}&{}\\
{}&{p{I_{l - k}}}
\end{array}} \right) \cdot G,\]
where $G\in {K[z_1,{z_2}, \ldots ,{z_n}]^{l \times l}}$.
\end{lemma}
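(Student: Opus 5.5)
The plan is to reduce $F$ modulo $p$ and work over the ring $R=L[z_2,\ldots,z_n]$, where $L=K[z_1]/(p)$ is a field because $p\in K[z_1]$ is irreducible. Let $\bar F\in R^{l\times l}$ be the image of $F$. Since $p\mid d_{k+1}(F)$, every $(k+1)\times(k+1)$ minor of $\bar F$ vanishes, so $\operatorname{rank}\bar F\le k$.

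The statement is equivalent to finding a unimodular $U$ whose last $l-k$ rows $u$ satisfy $uF\equiv 0 \pmod p$. Indeed, the last $l-k$ rows of $UF$ are then divisible by $p$, and we may write $UF=\mathrm{diag}\{I_k,pI_{l-k}\}G$. So the task splits into two parts. First, find a ZLP matrix $W\in R^{(l-k)\times l}$ with $W\bar F=0$. Second, lift $W$ to the bottom block of a unimodular matrix over $K[z_1,\ldots,z_n]$.

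\emph{Step 1 (the kernel).} I use $J_k(F)=K[z_1,\ldots,z_n]$. At any point of $\overline K^n$ on $p=0$, some reduced $k\times k$ minor $v_j$ is nonzero. Write $d_k(F)=p^s e$ with $p\nmid e$. I would first strip off the factor $p^s$ by Lemma \ref{le5} applied inductively at smaller indices, or argue directly. The aim is to show that locally at every maximal ideal of $R$, $\bar F$ (suitably normalized) has rank exactly $k$ with a unit $k\times k$ minor. Then the left kernel of $\bar F$ is a direct summand of $R^{1\times l}$, hence projective of rank $l-k$. By Quillen--Suslin over the polynomial ring $R$, this kernel is free, and a basis gives the ZLP matrix $W$.

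If $\bar F$ happens to have rank $<k$ at some points, I would instead take a rank-$(l-k)$ free summand of the kernel. This suffices because we only need $l-k$ rows killed by $p$.

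\emph{Step 2 (completion and lifting).} By Lemma \ref{le1} over $R$, $W$ completes to a unimodular $\bar V\in R^{l\times l}$ with $W$ as its last $l-k$ rows. Rescaling one row by the inverse of $\det\bar V\in L^{*}$, we may assume $\bar V\in SL_l(R)$. For $l\ge 3$, Suslin's stability theorem gives that $\bar V$ is a product of elementary matrices. Each elementary factor lifts to an elementary matrix over $K[z_1,\ldots,z_n]$, and the product of these lifts is the desired unimodular $U$.

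The lifted last rows $u$ satisfy $u\equiv w \pmod p$, so $uF\equiv w\bar F=0$. Here we should check that the row rescaling is done on the first $k$ rows, or is harmless when it hits the kernel rows.

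The case $l=2$ must be handled separately. There $k=1$ and $W=(a\ \ b)$ is a unimodular row over $R$. I would lift $a,b$ and correct one lift by a multiple of $p$ so that the lifted row generates the unit ideal, then complete it directly.

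I expect Step 1 to be the main obstacle. The hypothesis $J_k(F)=K[z_1,\ldots,z_n]$ concerns reduced minors of $F$, not minors of $\bar F$, so one must control the common factor $d_k(F)$ and its $p$-part. This is presumably where the global--local (Vaserstein-type) argument of \cite{38} is really needed. If the direct rank argument fails, I would fall back on Quillen patching: prove the factorization locally at each maximal ideal containing $p$, where a reduced $k$-minor is a unit and the row reduction is elementary, and then glue.
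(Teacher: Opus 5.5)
\textbf{Verdict: the proposal has two genuine gaps (Step~1, and the case $l=2$).} The paper only quotes this lemma from \cite{38}, so there is no proof to compare against directly. Its closest proof is that of Lemma~\ref{le6}. There, $d_k(F)=1$ and $J_k(F)=K[z_1,\ldots,z_n]$ give $\operatorname{rank}F_1=k$ with the $k\times k$ minors of $F_1$ generating the unit ideal. Lemmas~\ref{le61} and~\ref{le1} then apply, and no lifting is needed because $K[z_2,\ldots,z_n]\subset K[z_1,\ldots,z_n]$. Your reduction to $R=L[z_2,\ldots,z_n]$ is a valid and genuinely different device for non-linear $p$, and so is your lifting of $\bar V\in SL_l(R)=E_l(R)$ through elementary matrices when $l\ge 3$ (Suslin).

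\textbf{Step~1.} Your stated aim is that locally $\bar F$ has rank exactly $k$ with a unit $k\times k$ minor. This is false. Take $F=\mathrm{diag}\{z_2,z_1z_2\}$, $p=z_1$, $k=1$: the hypotheses hold, yet $\bar F=\mathrm{diag}\{z_2,0\}$ has rank $0$ at $z_2=0$. ``Normalizing'' by $d_k$ is an operation on minors, not on $\bar F$. Your fallback (take a rank-$(l-k)$ free summand of the kernel) is the same unproved claim, since a saturated kernel of rank $l-k$ equals any such summand it contains.

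What you need when $p\nmid d_k(F)$ is the mechanism of Lemma~\ref{le6}. Reduce $\sum_j b_jv_j=1$ modulo $p$. The $k\times k$ minors of $\bar F$ are then $\overline{d_k(F)}\,\bar v_j$, with $\overline{d_k(F)}\neq 0$ and $(\bar v_j)=R$. Hence $\operatorname{rank}\bar F=k$ and $J_k(\bar F)=R$, and Lemma~\ref{le61} over $R$ gives $W$.

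When $p\mid d_k(F)$ you have no argument at all. Then $\bar F$ has generic rank $<k$, and you must find globally a free rank-$(l-k)$ direct summand inside a kernel of larger rank; local existence does not give this. Your proposed induction at smaller indices would need $J_j(F)=K[z_1,\ldots,z_n]$ for $j<k$, which is not assumed. (Every application of the lemma in the paper has $p\nmid d_k(F)$.)

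\textbf{The case $l=2$.} ``Correct one lift by a multiple of $p$'' is asserted without proof, and it is the entire content of this case. Take nonzero lifts $a,b$ of a unimodular row $(\bar a,\bar b)$ over $R$ and apply the lemma to $\begin{pmatrix}-b&0\\ a&p\end{pmatrix}$. This shows that for $l=2$ the lemma is equivalent to lifting every such row, up to a factor in $L^{*}$, to a unimodular row over $K[z_1,\ldots,z_n]$.

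Correcting only $a$ asks for a unit of $K[z_1,\ldots,z_n]/(b)$ in the coset of $a$ modulo $p$. That is a stable-range-one type property these rings lack in general. The elementary-matrix route is also closed: $SL_2(R)\neq E_2(R)$ once $n\ge 3$ (Cohn), so you cannot lift by factoring into elementary matrices.

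You need a genuine argument here, for instance a local--global patching argument of the kind \cite{38} relies on. Alternatively, restrict to $\deg p=1$: then $R=K[z_2,\ldots,z_n]$ is a subring of $K[z_1,\ldots,z_n]$ and no lifting is needed.
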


\begin{lemma}\label{le61}(\cite{12})
Let $F \in {K[{z_1},{z_2}, \ldots ,{z_n}]^{l \times m}}$ with rank $r$. If ${J_r}(F) = K[z_1,{z_2}, \ldots ,{z_n}]$, then there exists a ZLP matrix $W \in {K[z_1,{z_2}, \ldots ,{z_n}]^{(l - r) \times l}}$ such that $W \cdot F= {0_{l - r, m}}.$
\end{lemma}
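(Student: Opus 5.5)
The plan is to build $W$ from the left kernel module of $F$. The main steps are to show that this module is locally a direct summand of rank $l-r$, then globalize with the Quillen-Suslin theorem (Lemma \ref{le1}). Throughout, write $R=K[z_1,\ldots,z_n]$ and set
\[
M=\{w\in R^{1\times l} : wF=0\}.
\]
Since $R$ is Noetherian, $M$ is a finitely generated submodule of $R^{1\times l}$. Over the fraction field, $F$ has rank $r$, so $M$ has rank $l-r$.

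\emph{Local step.} Let $\mathfrak m$ be a maximal ideal of $R$. Because $J_r(F)=R$, some reduced minor $v_j$ is not in $\mathfrak m$, so $v_j$ is a unit in $R_{\mathfrak m}$. Let $u_j=d_r(F)v_j$ be the corresponding minor, with row set $I$ and column set $J$.
\begin{itemize}
\item Rank $F=r$ means every $(r+1)\times(r+1)$ minor vanishes. By Cramer's rule, each row $i\notin I$ of $F$ equals $\sum_{k\in I} c_{ik}\,(\text{row }k)$.
\item Each coefficient has the form $c_{ik}=\pm u'/u_j$, where $u'$ is another $r\times r$ minor. Writing $u'=d_r(F)v'$, we get $c_{ik}=\pm v'/v_j\in R_{\mathfrak m}$.
\item Hence $M_{\mathfrak m}=M\otimes R_{\mathfrak m}$, which is the left kernel of $F$ over $R_{\mathfrak m}$ since localization is flat, contains the $l-r$ rows $(-C\ \ I_{l-r})$, up to a column permutation.
\item These rows generate $M_{\mathfrak m}$. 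Indeed, any kernel vector can be reduced by them to one supported on $I$. Such a vector must vanish, because the rows indexed by $I$ contain the invertible-over-$\mathrm{Frac}(R)$ minor $u_j$, so they are independent.
\item The rows $(-C\ \ I_{l-r})$ span a free direct summand of $R_{\mathfrak m}^{1\times l}$ of rank $l-r$, with complement given by the coordinates in $I$.
\end{itemize}

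\emph{Global step.} $M$ is finitely generated and each localization $M_{\mathfrak m}$ is free of rank $l-r$. Moreover, the quotient $R^{1\times l}/M$ is locally free of rank $r$, since $(R^{1\times l}/M)_{\mathfrak m}$ is the localized quotient identified above. A finitely presented module that is locally free is projective, so $R^{1\times l}/M$ is projective. Hence the sequence $0\to M\to R^{1\times l}\to R^{1\times l}/M\to 0$ splits, and $M$ is a projective direct summand of $R^{1\times l}$. By the Quillen-Suslin theorem, $M$ is free of rank $l-r$. Let $W\in R^{(l-r)\times l}$ have as rows a basis of $M$; then $WF=0_{l-r,m}$.

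\emph{ZLP property.} $R^{1\times l}=M\oplus N$ with $N$ projective, hence free of rank $r$, again by Quillen-Suslin. Stacking a basis of $N$ under $W$ gives a square matrix whose rows form a basis of $R^{1\times l}$, i.e.\ a unimodular matrix. Expanding its determinant by Laplace along the first $l-r$ rows shows that the $(l-r)\times(l-r)$ minors of $W$ generate $R$. So $W$ is ZLP.

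The main obstacle is the local step: verifying that the Cramer coefficients lie in $R_{\mathfrak m}$, which is exactly where the hypothesis $J_r(F)=R$ is used, and that the local kernel is a direct summand rather than merely free. The passage from local to global is standard commutative algebra plus Lemma \ref{le1}.
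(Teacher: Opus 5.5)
Your proof is correct, but it follows a different route from the one behind this lemma. The paper gives no proof of Lemma~\ref{le61}; it simply quotes it from \cite{12}. The argument that fits the paper's own toolkit is matrix-theoretic, and it is essentially carried out inside the proof of Theorem~\ref{th1.2}. There one uses the Lin--Bose theorem (Lemma~\ref{le00}) to factor $F=G'H$ with $G'\in K[z_1,\ldots,z_n]^{l\times r}$ ZRP. Quillen--Suslin then completes $G'$ to a unimodular $U$ with $UG'=(I_r\ \ 0)^T$, and $W$ is taken to be the last $l-r$ rows of $U$. This $W$ is ZLP because its rows are rows of a unimodular matrix, and $WF=WG'H=0$.

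You avoid the Lin--Bose factorization entirely. You use $J_r(F)=K[z_1,\ldots,z_n]$ only to make one reduced minor $v_j$ a unit at each maximal ideal, so that the Cramer coefficients $\pm v'/v_j$ are local. This shows the left kernel $M$ is locally a free direct summand of rank $l-r$, and Quillen--Suslin then globalizes.

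Your route is more self-contained. It needs only localization, Cramer's rule, ``finitely presented and locally free implies projective'', and Quillen--Suslin, whereas the Lin--Bose theorem is a considerably deeper result. It also shows transparently where the hypothesis on $J_r(F)$ enters. The factorization route is shorter given the cited theorems, and it is directly computable with existing Lin--Bose and Quillen--Suslin algorithms, which matches the paper's algorithmic emphasis.

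Two minor points. First, what you invoke is the projective-module form of Quillen--Suslin (finitely generated projective modules over $K[z_1,\ldots,z_n]$ are free), not literally the matrix statement of Lemma~\ref{le1}. Second, your second appeal to Quillen--Suslin, for the complement $N$, is unnecessary. A splitting of $M\hookrightarrow K[z_1,\ldots,z_n]^{1\times l}$ gives $X\in K[z_1,\ldots,z_n]^{l\times(l-r)}$ with $WX=I_{l-r}$. Cauchy--Binet then shows directly that the $(l-r)\times(l-r)$ minors of $W$ generate the unit ideal.
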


\begin{lemma}\label{le6}
Let $F \in {K[{z_1},{z_2}, \ldots ,{z_n}]^{l \times l}}$ and $(z_1-f(z_2,\ldots,z_n))\mid\det(F)$.
If ${d_k}(F) = 1$, ${J_k}(F) = K[{z_1},{z_2}, \ldots ,{z_n}]$ and $(z_1-f(z_2,\ldots,z_n))\mid{d_{k + 1}}(F)$, then there exists a unimodular matrix $U \in {K[{z_2}, \ldots ,{z_n}]^{l \times l}}$ such that
\[{U} \cdot F = \left( {\begin{array}{*{20}{c}}
{{I_k}}&{}\\
{}&{{(z_1-f(z_2,\ldots,z_n))}{I_{l - k}}}
\end{array}} \right) \cdot {G}\]
where ${G} \in {K[{z_1},{z_2}, \ldots ,{z_n}]^{l \times l}}$.
\end{lemma}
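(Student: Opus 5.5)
The plan is to reduce to the ring $K[z_2,\ldots,z_n]$ by substituting $z_1=f(z_2,\ldots,z_n)$, build $U$ there, and then lift back using the fact that $z_1-f$ is monic in $z_1$. Throughout write $p=z_1-f(z_2,\ldots,z_n)$ and $R=K[z_2,\ldots,z_n]$. We may assume $k<l$, since otherwise $d_{k+1}(F)$ is not defined.

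\emph{Reduction to $R$.} Consider the $K$-algebra homomorphism $\sigma: K[z_1,\ldots,z_n]\to R$ with $z_1\mapsto f$ and $z_i\mapsto z_i$ for $i\ge 2$. Put $F_0=\sigma(F)\in R^{l\times l}$. Minors commute with $\sigma$, so:
\begin{itemize}
\item Every $(k+1)\times(k+1)$ minor of $F$ is divisible by $p$, and $\sigma(p)=0$. Hence all $(k+1)\times(k+1)$ minors of $F_0$ vanish, and $\operatorname{rank}F_0\le k$.
\item Since $d_k(F)=1$, the reduced $k\times k$ minors of $F$ are just its $k\times k$ minors. The hypothesis $J_k(F)=K[z_1,\ldots,z_n]$ gives $\sum_j a_j u_j=1$. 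Applying $\sigma$ yields $\sum_j\sigma(a_j)\sigma(u_j)=1$ in $R$.
\end{itemize}
So the $k\times k$ minors of $F_0$ generate $R$. Therefore $\operatorname{rank}F_0=k$, $d_k(F_0)=1$ and $J_k(F_0)=R$.

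\emph{Constructing $U$ over $R$.} Apply Lemma \ref{le61} over the polynomial ring $R$ to get a ZLP matrix $W\in R^{(l-k)\times l}$ with $W F_0=0$. The Quillen--Suslin theorem (Lemma \ref{le1}) over $R$ gives a unimodular $V\in R^{l\times l}$ with $WV=(I_{l-k}\ 0)$. Then $W=(I_{l-k}\ 0)V^{-1}$, so $W$ consists of the first $l-k$ rows of the unimodular matrix $V^{-1}$. Let $U=PV^{-1}$, where $P$ is the permutation matrix moving the first $l-k$ rows to the bottom. Then $U\in R^{l\times l}$ is unimodular and its last $l-k$ rows are exactly $W$.

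\emph{Lifting back.} Since $U$ has no $z_1$, $\sigma(UF)=U F_0$. The last $l-k$ rows of $UF_0$ equal $WF_0=0$. So every entry $e$ in the last $l-k$ rows of $UF$ satisfies $\sigma(e)=0$. Because $p$ is monic in $z_1$, division by $p$ in $R[z_1]$ gives $e=p\,q+r$ with $r\in R$. Then $r=\sigma(e)=0$, so $p\mid e$. Define $G$ by keeping the first $k$ rows of $UF$ and dividing the last $l-k$ rows by $p$. This gives $UF=\mathrm{diag}\{I_k,\,pI_{l-k}\}\,G$.

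The step needing the most care is the second bullet: the unit-ideal conditions must pass to $F_0$ so that Lemma \ref{le61} applies with rank exactly $k$. This is where $d_k(F)=1$ is essential, because it lets us identify reduced minors with actual minors, which commute with $\sigma$.
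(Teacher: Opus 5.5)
Your proof is correct and follows the same route as the paper. Like the paper, you substitute $z_1=f$, show the specialized matrix has rank exactly $k$ with its $k\times k$ minors generating $K[z_2,\ldots,z_n]$, and then apply Lemma \ref{le61} and Quillen--Suslin over $K[z_2,\ldots,z_n]$ to get $U$ whose last $l-k$ rows are $W$. You also make explicit a few steps the paper handles differently or leaves implicit: you push the B\'ezout identity through $\sigma$ instead of using the paper's common-zero argument, you spell out how $W$ is completed to a unimodular $U$, and you use division by the monic $z_1-f$ to show that $\sigma(e)=0$ forces $(z_1-f)\mid e$.
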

\begin{proof}
Suppose that the $k\times k$ minors of $F$ are ${a_1},{a_2}, \ldots ,{a_\beta }$ and $F_1 = F({f},{z_2}, $ $\ldots ,{z_n})$, the $k\times k$ minors of $F_1$ are ${b_1},{b_2}, \ldots ,{b_\beta }$. It is clear that $({f},{z_2}, \ldots ,{z_n})$ is a zero of $\det (F)$ for every $({z_2}, \ldots ,{z_n}) \in \overline{K}^{n-1}$ and $(z_1-f({z_2}, $ $\ldots ,{z_n}))\mid {d_{k+ 1}}(F)$. Therefore, $rank(F_1) \le k$.
Assume that there exists $({z_{20}}, \ldots ,{z_{n0}}) \in \overline{K}^{n-1}$ such that
\[{b_i}({z_{20}}, \ldots ,{z_{n0}}) = 0,\ i = 1,2, \ldots ,\beta.\]
Let ${z_{10}} = {f}({z_{20}}, \ldots ,{z_{n0}})$. Then we have
\[{a_i}({z_{10}},{z_{20}}, \ldots ,{z_{n0}}) = 0,\ i = 1,2, \ldots ,\beta. \]
Since ${d_k}(F) = 1$, ${J_k}(F) = K[z_1,{z_2}, \ldots ,{z_n}]$, we can obtain that all the $k\times k$ minors of $F$ have no common zeros in $\overline{K}^n$,  a contradiction. Thus, the $k\times k$ minors of $F_1$ generate unit ideal $K[{z_2}, \ldots ,{z_n}]$ and $rank(F_1) \ge k$. It follows that $rank(F_1) = k$. According to Lemma \ref{le61}, there exists a ZLP matrix $W \in {K[{z_2}, \ldots ,{z_n}]^{(l - k) \times l}}$ such that $W \cdot F_1= {0_{l - k, l}}.$
By Lemma \ref{le1}, a unimodular matrix $U \in {K[{z_2}, \ldots ,{z_n}]^{l \times l}}$ can be constructed and $W$ is its last $l-k$ rows. Therefore, the last $l-k$ rows of matrix $U \cdot F$ have a common divisor $z_1-f(z_2,\ldots,z_n)$, i.e.,
\[{U} \cdot F = \left( {\begin{array}{*{20}{c}}
{{I_k}}&{}\\
{}&{{(z_1-f(z_2,\ldots,z_n))}{I_{l - k}}}
\end{array}} \right) \cdot {G},\]
where ${G} \in {K[{z_1},{z_2}, \ldots ,{z_n}]^{l \times l}}$. 
\end{proof}





\begin{lemma}\label{le9}(\cite{50})
Let $F,F_1,F_2 \in {K[{z_1},{z_2}, \ldots ,{z_n}]^{l \times l}}$ satisfy $F = F_1F_2$, $gcd(det(F_1), det(F_2)) = 1$ and $J_i(F) = K[{z_1},{z_2}, \ldots ,{z_n}]$ for $i = 1, \ldots , l$. Then $d_i(F) = d_i(F_1) \cdot d_i(F_2)$ and $J_i(F_1) = J_i(F_2) = K[{z_1},{z_2}, \ldots ,{z_n}]$, where $i = 1, \ldots , l$.
\end{lemma}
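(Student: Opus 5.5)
The plan is to prove the multiplicativity of $d_i$ first, and then deduce the statement about $J_i(F_1)$ and $J_i(F_2)$ from a Cauchy--Binet identity between reduced minors. Throughout, $R=K[z_1,\ldots,z_n]$, a UFD.

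\emph{Step 1 (one divisibility).} By the Cauchy--Binet formula, every $i\times i$ minor of $F=F_1F_2$ is a sum of products $\mu\nu$, where $\mu$ is an $i\times i$ minor of $F_1$ and $\nu$ one of $F_2$. Each such product is divisible by $d_i(F_1)d_i(F_2)$, so $d_i(F_1)d_i(F_2)\mid d_i(F)$.

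\emph{Step 2 (the reverse divisibility).} Here we compare $p$-adic valuations for every irreducible $p\in R$. If $p\nmid d_i(F)$, Step 1 already gives $v_p(d_i(F_1)d_i(F_2))=0=v_p(d_i(F))$. So suppose $p\mid d_i(F)$. Then $p\mid\det F=\det F_1\det F_2$. Since $\gcd(\det F_1,\det F_2)=1$, $p$ divides exactly one of them; say $p\nmid\det F_2$.

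Localize at the height-one prime $(p)$. The ring $R_{(p)}$ is a DVR, and $F_2$ is invertible over $R_{(p)}$ because $\det F_2$ is a unit there. For any matrix $A$ over $R$, the number $v_p(d_i(A))$ is the minimum of $v_p$ over the $i\times i$ minors of $A$. Applying Cauchy--Binet in both directions ($F=F_1F_2$ and $F_1=FF_2^{-1}$ over $R_{(p)}$) shows this minimum is the same for $F$ and for $F_1$. Hence $v_p(d_i(F))=v_p(d_i(F_1))$.

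Also $v_p(d_i(F_2))=0$: if $p$ divided all $i\times i$ minors of $F_2$, then by Laplace expansion it would divide $\det F_2$. (For $p\nmid\det F_1$ the roles are swapped.) Therefore $v_p(d_i(F))=v_p(d_i(F_1))+v_p(d_i(F_2))$ for every irreducible $p$. Since both sides are determined up to units, this gives $d_i(F)=d_i(F_1)d_i(F_2)$. I expect this step to be the main point needing care. The coprimality hypothesis is exactly what makes one factor invertible locally at each relevant prime.

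\emph{Step 3 (reduced minors).} Write the $i\times i$ minors of $F_1$ as $d_i(F_1)v^{(1)}_s$ and those of $F_2$ as $d_i(F_2)v^{(2)}_t$. Substitute into Cauchy--Binet and divide by $d_i(F)=d_i(F_1)d_i(F_2)$ from Step 2. Each reduced $i\times i$ minor of $F$ is then an $R$-linear combination (indeed a sum) of products $v^{(1)}_sv^{(2)}_t$.

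Suppose $J_i(F_1)\neq R$. By Hilbert's Nullstellensatz there is a point $x\in\overline K^n$ at which all $v^{(1)}_s$ vanish. Then all reduced $i\times i$ minors of $F$ vanish at $x$, contradicting $J_i(F)=R$. The same argument gives $J_i(F_2)=R$. This holds for each $i=1,\ldots,l$, completing the proof.
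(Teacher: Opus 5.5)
Your proof is correct. The paper gives no proof of this lemma; it is quoted from \cite{50}, so there is no internal argument to compare against. Your argument is complete and self-contained. Cauchy--Binet gives $d_i(F_1)d_i(F_2)\mid d_i(F)$. Localizing at each height-one prime $(p)$, coprimality of the determinants makes one factor invertible over the DVR $R_{(p)}$, which pins down $v_p(d_i(F))$.

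Your route also shows something the statement hides: Step~2 never uses $J_i(F)=R$. So the multiplicativity $d_i(F_1F_2)=d_i(F_1)d_i(F_2)$ holds under coprimality of the determinants alone, and the unit-ideal hypothesis is needed only for the conclusion about $J_i(F_1)$ and $J_i(F_2)$.

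In Step~3 the Nullstellensatz is dispensable. Your identity writes each reduced minor of $F$, up to a unit, as $\sum_K v^{(1)}_{IK}v^{(2)}_{KJ}$, which gives directly
\[
J_i(F)\subseteq J_i(F_1)J_i(F_2)\subseteq J_i(F_1)\cap J_i(F_2),
\]
so $J_i(F)=R$ forces $J_i(F_1)=J_i(F_2)=R$.

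Two small points you use tacitly are worth stating:
\begin{itemize}
\item The hypothesis $J_l(F)=R$ presupposes $\det F\neq 0$. Hence $\det F_1$ and $\det F_2$ are both nonzero, and every reduced minor you use is defined.
\item $p\mid d_i(F)$ implies $p\mid\det F$, because $d_i(F)\mid d_l(F)=\det F$ by Laplace expansion.
\end{itemize}
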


\begin{lemma}\label{le8}(\cite{50})
Let $F \in {K[{z_1},{z_2}, \ldots ,{z_n}]^{l \times m}}$ with rank $r$ and $d_r(F)\in K[{z_1}]$. Then $F$ is equivalent to its Smith form if and only if $J_k(F)=K[{z_1},{z_2}, \ldots ,{z_n}]$ for $k = 1, \ldots, r$.
\end{lemma}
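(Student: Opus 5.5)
\emph{Necessity.} Suppose $F\sim S$, where $S=(\mathrm{diag}\{\Phi_1,\ldots,\Phi_r,0,\ldots,0\}\ \ 0)$ is the Smith form of $F$. By Lemma \ref{le2}, $J_k(F)=J_k(S)$ for every $k$. Among the $k\times k$ minors of $S$ is $\Phi_1\cdots\Phi_k=d_k(S)$, so the corresponding reduced minor equals $1$. Hence $J_k(F)=J_k(S)=K[z_1,\ldots,z_n]$ for $k=1,\ldots,r$.

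\emph{Sufficiency, step 1: reduction to the square nonsingular case.} Since $J_r(F)$ is the unit ideal, Lemma \ref{le61} gives a ZLP matrix $W$ with $WF=0$. By Lemma \ref{le1}, $W$ can be completed to a unimodular $U$ whose last $l-r$ rows are $W$, so $UF=\binom{F'}{0}$ with $F'$ of size $r\times m$ and full row rank. Applying the same argument to the columns of $F'$ (to $F'^{T}$) yields $F\sim\mathrm{diag}\{F'',0\}$ with $F''\in K[z_1,\ldots,z_n]^{r\times r}$. Lemma \ref{le2} shows that $d_k$ and $J_k$ are unchanged, so $\det F''=c\,d_r(F)\in K[z_1]$ with $c\in K^{*}$, and all $J_k(F'')$ are the unit ideal. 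It therefore suffices to treat a square $F$ with $\det F\in K[z_1]$, and we induct on $\deg_{z_1}\det F$.

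\emph{Step 2: peeling off one irreducible factor.} If $\det F$ is a constant, there is nothing to prove. Otherwise choose an irreducible $p\in K[z_1]$ dividing $\Phi_l(F)$, and let $k$ be minimal with $p\mid\Phi_{k+1}$, so that $p\mid d_{k+1}(F)$. Lemma \ref{le5} gives a unimodular $U$ with
\[
UF=\mathrm{diag}\{I_k,\ pI_{l-k}\}\,G .
\]
I would compute $d_i(G)$ from $d_i(F)$: the minimality of $k$ should give $d_i(G)=d_i(F)$ for $i\le k$ and $d_i(G)=d_i(F)/p^{\,i-k}$ for $i>k$. Arguing as in Lemma \ref{le4}, one then shows $J_i(G)$ is the unit ideal for all $i$. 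Since $\det G\in K[z_1]$ has smaller degree, the induction hypothesis gives $G\sim S_G=\mathrm{diag}\{\Phi_i(G)\}$.

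\emph{Step 3, the main obstacle: reassembling.} Writing $G=VS_GW$, we get $UF=DVS_GW$ with $D=\mathrm{diag}\{I_k,pI_{l-k}\}$. The unimodular $V$ sits between two diagonal matrices, so one cannot simply conclude $F\sim DS_G$. I would try two routes in this order.
\begin{enumerate}
\item First, strengthen the inductive statement to: $F$ is equivalent to a matrix over $K[z_1]$. Then, because $K[z_1]$ is a PID, this matrix reduces to its Smith form by unimodular matrices over $K[z_1]$. For the strengthened statement, I would try to choose $U$ in Lemma \ref{le5} and the induction data so that the product $DV$ can be rewritten as $V'D'$.
\item Failing that, use a local--global argument. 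Over $K(z_1)$, or after localizing $K[z_1]$ at each maximal ideal $(p)$, $\det F$ is a power of a single prime up to units. There the single-prime peeling of Step 2 does reassemble, since all the diagonal matrices involved are powers of $p$ and commute appropriately with the reductions. One then patches the local equivalences via a Vaserstein/Quillen-type local--global principle for equivalence over $R[z_2,\ldots,z_n]$ with $R=K[z_1]$, in the spirit of \cite{38}.
\end{enumerate}
I expect this patching step to carry the real difficulty.
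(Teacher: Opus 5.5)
The paper gives no proof of Lemma~\ref{le8}; it quotes it from \cite{50}, where it is obtained by localization techniques, so your argument has to stand on its own. Your necessity part and Step~1 (two applications of Lemmas~\ref{le61} and~\ref{le1} in place of the Lin--Bose factorization) are correct. In Step~2 the formula for $d_i(G)$ is true, but you only assert it, and it needs an argument. For example, work over the discrete valuation ring $K[z_1,\ldots,z_n]_{(p)}$. Exactly $k$ invariant factors of $F$ are prime to $p$, and the rows of $UF$ below the $k$-th vanish mod $p$, so the first $k$ rows of $UF$ are independent mod $p$. Column operations, together with block lower-triangular row operations on $G$ (which correspond to row operations on $UF=DG$), then bring $G$ to $\mathrm{diag}\{I_k,G_4\}$ and $UF$ to $\mathrm{diag}\{I_k,pG_4\}$. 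For primes $q\neq p$ the $q$-adic valuations of $d_i(G)$ and $d_i(F)$ agree trivially. Also, Lemma~\ref{le4} does not apply verbatim, since its hypothesis $d_i(D)=d_i(UF)$ fails when $d_i(F)\neq 1$. Instead, once the $d_i(G)$ are known, each reduced minor of $UF$ is a power of $p$ times the corresponding reduced minor of $G$, so $J_i(G)\supseteq J_i(F)$.

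The genuine gap is Step~3, and it is the whole content of sufficiency.

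Route~1 is a restatement rather than a strengthening. The Smith form lies in $K[z_1]^{l\times l}$, and gcd's of elements of $K[z_1]$ are the same in $K[z_1]$ and in $K[z_1,\ldots,z_n]$. Hence ``$F$ is equivalent to a matrix over $K[z_1]$'' is equivalent to $F\sim S$. The rewriting $DV=V'D'$ comes with no mechanism, and for a general unimodular $V$ it is impossible. It forces $l-k$ columns of $V$ to have their first $k$ entries divisible by $p$, which already fails for $D=\mathrm{diag}\{1,p\}$ and $V$ with rows $(1,1),(0,1)$.

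Route~2 names the right kind of tool, roughly a Vaserstein-type patching principle: local equivalence of $F$ to $F(z_1,0,\ldots,0)$ over $K[z_1]_{\mathfrak m}[z_2,\ldots,z_n]$ for all maximal $\mathfrak m$ implies global equivalence. But you neither state it precisely nor prove it, and the local input you feed into it is unjustified. After localizing at $(p)$, the ring $K[z_1]_{(p)}[z_2,\ldots,z_n]$ is still a multivariate polynomial ring. $V$ is still an arbitrary unimodular matrix over it, and $DVS_G$ presents exactly the same obstruction as before. That $D$ and $S_G$ are diagonal with powers of $p$ does not let them pass through $V$. This single-prime situation is precisely what \cite{16,49,38} had to handle with a generalized Vaserstein theorem.

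As written, your proposal proves necessity, the reduction to square matrices, and a peeling step, but not sufficiency.
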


%
%

\section{Equivalence to Smith form  for two classes of matrices}
In this section, we solve Problems \ref{pro1} and \ref{pro2}.
Let matrix $F \in K[z_1, z_2, \ldots, z_n]^{l \times l}$. We first consider the case where $\det(F) = (z_1 - f(z_2, \ldots, z_n))h$ with $h \in K[z_n]$, and seek the conditions under which $F$ is equivalent to its Smith form. Subsequently, we generalize this result to matrices with $\det(F) = (z_1 - f_1(z_2, \ldots, z_n))\cdots(z_{n-1} - f_{n-1}(z_n))h$. Finally, we present an example to illustrate the main results.


\begin{theorem}\label{th1}
Let
$$
B =diag\{ h_1,h_2,\ldots,h_l\}\cdot V \cdot diag\{1,\ldots,1,z_1-f(z_2,\ldots,z_n)\},
$$
where $h_1 ,h_2,\ldots, h_l\in K[z_n]$ satisfy $h_1\mid h_2 \mid \cdots \mid h_l$ and $V \in {K[z_2,\ldots,z_n]^{l \times l}}$ is a unimodular matrix. If $d_k(B) = h_1\cdots h_k$ and $J_k(B)=K[z_1,z_2,\ldots,z_n]$, $k = 1, \ldots, l-1$, then
$$
B\sim diag\{ h_1,h_2,\ldots,(z_1-f(z_2,\ldots,z_n))h_l\}.
$$
\end{theorem}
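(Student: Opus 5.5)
The plan is to commute the factor $P:=diag\{1,\ldots,1,z_1-f\}$ past $H:=diag\{h_1,\ldots,h_l\}$ and $V$. Write $p=z_1-f(z_2,\ldots,z_n)$, so $B=HVP$.

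\textbf{Which right multipliers commute with $P$.} For a unimodular $E$ we have $EP=PE'$ with $E'=P^{-1}EP$. This $E'$ is polynomial (and then unimodular) exactly when the last row of $E$ is $(0,\ldots,0,c)$ with $c\in K^*$, i.e.\ when no column $j<l$ receives a multiple of column $l$. Adding multiples of the first $l-1$ columns to column $l$ is allowed. The goal is therefore to reduce $M:=HV\in K[z_2,\ldots,z_n]^{l\times l}$ to $H$ as $U_1MEE_2=H$, where $U_1$ is an arbitrary unimodular left factor and $E,E_2$ are right factors of this special shape. Then
\[B=U_1^{-1}HE_2^{-1}E^{-1}P=U_1^{-1}HPE''\]
for a unimodular $E''$, and $HP=diag\{h_1,\ldots,h_{l-1},ph_l\}$, which is the claim.

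\textbf{Step 1: the first $l-1$ columns.} Let $C$ be the $l\times(l-1)$ matrix formed by the first $l-1$ columns of $M$. Every $k\times k$ minor of $B$ ($k\le l-1$) is either a minor of $C$ or $p$ times a minor of $M$ using its last column. Evaluate at $z_1=f(z_2,\ldots,z_n)$; the entries of $M$ and $C$ do not involve $z_1$. Then the conditions $d_k(B)=h_1\cdots h_k$ and $J_k(B)=K[z_1,\ldots,z_n]$ imply that the $k\times k$ minors of $C$, divided by $h_1\cdots h_k$, have no common zero in $\overline{K}^{\,n-1}$. This is the same argument as in the proof of Lemma \ref{le6}: a common zero $(z_{20},\ldots,z_{n0})$ lifts to the common zero $(f(z_{20},\ldots,z_{n0}),z_{20},\ldots,z_{n0})$ of the reduced minors of $B$. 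The same argument shows $d_k(C)=h_1\cdots h_k$, since any extra common factor would divide all the $k$-minors of $B$ restricted to $p=0$.

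So $C$ has rank $l-1$, $d_{l-1}(C)\in K[z_n]$, and all $J_k(C)$ are the unit ideal of $K[z_2,\ldots,z_n]$. Applying Lemma \ref{le8} with $z_n$ playing the role of $z_1$ gives unimodular $U_1$ over $K[z_2,\ldots,z_n]$ and $U_2$ of size $l-1$ such that
\[U_1CU_2=\begin{pmatrix}diag\{h_1,\ldots,h_{l-1}\}\\ 0\end{pmatrix}.\]
Hence $U_1M\,diag\{U_2,1\}$ equals $diag\{h_1,\ldots,h_{l-1}\}$ bordered by a last column $w=(w_1,\ldots,w_l)^T$ and a zero last row apart from $w_l$. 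The factor $diag\{U_2,1\}$ has the admissible shape.

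\textbf{Step 2: the last column.} Taking determinants gives $w_l=c\,h_l$ with $c\in K^*$. It remains to clear $w_1,\ldots,w_{l-1}$ by adding multiples of the first $l-1$ columns to the last one, which is an admissible operation. This requires $h_i\mid w_i$. I expect this divisibility to be the main obstacle. I would derive it from $d_k(U_1M\,diag\{U_2,1\})=d_k(M)$ (Lemma \ref{le2}) together with $d_k(M)=h_1\cdots h_k$, which follows from $d_k(B)$ restricted to $p=0$ as in Step 1. Concretely, the $k\times k$ minor using rows $\{1,\ldots,k-1,i\}$ and columns $\{1,\ldots,k-1,l\}$ equals $\pm h_1\cdots h_{k-1}w_i$. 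It must be divisible by $h_1\cdots h_k$, so $h_k\mid w_i$ for all $i\ge k$; taking $k=i$ gives $h_i\mid w_i$. If this minor bookkeeping fails, the fallback is to use $h_1\mid\cdots\mid h_l$ and Lemma \ref{le5} (in the variable $z_n$) to peel off the irreducible factors of $h_l$ one at a time.
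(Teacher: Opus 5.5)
Your proposal is correct and is essentially the paper's proof: pass to the first $l-1$ columns, use evaluation at $z_1=f(z_2,\ldots,z_n)$ to transfer $d_k$ and $J_k$ to them, apply Lemma \ref{le8} with $z_n$ as the distinguished variable, clear the last column by column operations, and read off its last entry from the determinant (your ``commuting $P$'' bookkeeping is just a rephrasing of operating on $B$ directly). Your minor computation $\pm h_1\cdots h_{k-1}w_i$, combined with $d_k(U_1M\,diag\{U_2,1\})=d_k(M)=h_1\cdots h_k$ (immediate from $M=HV\sim H$), justifies the divisibility $h_i\mid w_i$, which the paper only asserts.
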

\begin{proof}
Let $V = (v_{ij})_{l\times l} =  \left( {\begin{array}{*{20}{c}}
{{V_1}}&{{V_2}}\\
{{V_3}}&{{V_4}}
\end{array}} \right)$, where $V_1\in {K[z_2,\ldots,z_n]^{(l-1) \times (l-1)}}$, $V_2\in {K[z_2,\ldots,z_n]^{(l-1) \times 1}}$, $V_3\in {K[z_2,\ldots,z_n]^{1 \times (l-1)}}$ and $V_4=v_{l,l}\in {K[z_2,\ldots,z_n]^{1 \times 1}}$. Then we have
\begin{align}
B &= diag\{ h_1,h_2,\ldots,h_l\}  \cdot V \cdot diag\{1,\ldots,1,z_1-f(z_2,\ldots,z_n)\}
\nonumber  \\&= \left( {\begin{array}{*{20}{c}}
h_1v_{1,1}&{h_1v_{1,2}}&{\cdots}&(z_1-f(z_2,\ldots,z_n))h_1{v_{1,l}}\\
{h_2v_{2,1}}& h_2v_{2,2} &{\cdots}&(z_1-f(z_2,\ldots,z_n))h_2{v_{2,l}}\\
{\vdots}&{\vdots}& \ddots &{\vdots}\\
{h_lv_{l,1}}&{h_lv_{l,2}}&{\cdots}&{(z_1-f(z_2,\ldots,z_n))h_lv_{l,l}}
\end{array}} \right).\nonumber
\end{align}
Let $B_1$ denote the submatrix formed by the first $l-1$ columns of $B$:
\[B_1 = \left( {\begin{array}{*{20}{c}}
h_1v_{1,1}&{h_1v_{1,2}}&{\cdots}&h_1{v_{1,l-1}}\\
{h_2v_{2,1}}& h_2v_{2,2} &{\cdots}&h_2{v_{2,l-1}}\\
{\vdots}&{\vdots}& \ddots &{\vdots}\\
{h_lv_{l,1}}&{h_lv_{l,2}}&{\cdots}&{h_lv_{l,l-1}}
\end{array}} \right).\]
We begin by proving that $d_i(B_1) =  d_i(B)$ for $i=  1, \ldots,l-1$. Given that $d_i(B) = h_1\cdots h_i$ for $i = 1, \ldots, l-1$, it follows that all the $i\times i$ reduced minors of $B$ are:
\begin{equation}
a_{i1},\ldots, a_{ik_0},h'a_{ik_1},\ldots, h''a_{i\beta}, b_{i1},\ldots, b_{i\xi},
\end{equation}
where $a_{i1},\ldots, a_{ik_0},h'a_{ik_1},\ldots, h''a_{i\beta}$ are obtained by dividing the $i\times i$ minors of $B_1$ by $d_i(B)$, $b_{i1},\ldots, b_{i\xi}$ are the remaining reduced minors of $B$.  
The fact that $d_i(B)\mid d_i(B_1)$ implies that there exist some polynomials $q_i\in K[{z_2}, \ldots ,{z_n}]$ such that $d_i(B_1) = q_i\cdot d_i(B)$.
We claim that $q_i$ is a nonzero constant for each $i=1,\ldots,l-1$. Otherwise,  we can conclude that $a_{i1},\ldots, a_{ik_0},h'a_{ik_1},\ldots, h''a_{i\beta}$ have a common factor $q_i$, which in turn implies that these minors have a common zero $(z_{20},\ldots,z_{n0})\in \overline{K}^{n-1}$. In addition, from the fact that $(z_1-f(z_2,\ldots,z_n))\mid b_{ij}$ for each $j = 1, \ldots, \xi$, the $b_{ij}$ therefore have a common zero $\alpha_0 = (f(z_{20},\ldots,z_{n0}),z_{20},\ldots,z_{n0}) \in \overline{K}^{n}$.
It follows that all the $i\times i$ reduced minors of $B$ have a common zero $\alpha_0$, which contradicts $J_i(B)=K[{z_1},{z_2}, \ldots ,{z_n}]$. Hence, each $q_i$ is a non-zero constant. Without loss of generality, we may assume that $q_i=1$ for all $i$, so that $d_i(B_1) =  d_i(B)=h_1\cdots h_i$, $i=  1, \ldots,l-1$. 

Next, we prove that $J_i(B_1)=K[{z_1},{z_2}, \ldots ,{z_n}]$ for $i=  1, \ldots,l-1$. Since $B_1 \in {K[{z_2}, \ldots ,{z_n}]^{l \times l}}$, we observe that if all the $i\times i$ reduced minors of $B_1$ have a common zero; then, given that $(z_1-f(z_2,\ldots,z_n))\mid b_{ij}$ with $j = 1, \ldots, \xi$, all the $i\times i$ reduced minors of $B$ must also have a common zero. This yields a contradiction.

According to Lemma \ref{le8}, there are two unimodular matrices $U_1\in {K[{z_1},{z_2}, \ldots ,{z_n}]^{l \times l}}$ and $U_2\in {K[{z_1},{z_2}, \ldots ,{z_n}]^{(l-1) \times (l-1)}}$ such that
\[U_1B_1U_2 = 
\left( {\begin{array}{*{20}{c}}
h_1 & \cdots & 0\\
\vdots & \ddots & \vdots\\
0 & \cdots & h_{l-1}\\
0 & \cdots & 0
\end{array}} \right).\]
Let
$U'_2 = \left( {\begin{array}{*{20}{c}}
{{U_2}}&{}\\
{}&{{1}}
\end{array}} \right)$. 
Then
\begin{align}
U_1BU'_2 = \left( {\begin{array}{*{20}{c}}
h_1&{0}&{\cdots}&{0}&{(z_1-f(z_2,\ldots,z_n))v'_{1,l}}\\
{0}& h_2 &{\cdots}&{0}&{(z_1-f(z_2,\ldots,z_n))v'_{2,l}}\\
{\vdots}&{\vdots}&\ddots&{\vdots}&{\vdots}\\
{0}&{0}&{\cdots}&h_{l-1}&(z_1-f(z_2,\ldots,z_n))v'_{l-1,l} \\
{0}&{0}&{\cdots}& 0 & (z_1-f(z_2,\ldots,z_n))v'_{l,l} 
\end{array}} \right),\nonumber
\end{align}
where
$(v'_{1,l},v'_{2,l},\ldots,v'_{l,l})^T = U_1\cdot diag\{ h_1,\ldots,h_l\}\cdot\left( {\begin{array}{*{20}{c}}
{{V_2}}\\
{{V_4}}
\end{array}} \right).$
Since $h_1 \mid v{'_{1,l}},h_2 \mid v{'_{2,l}},\ldots, h_{l-1} \mid v{'_{l-1,l}}$, we can derive the following equivalence relation after a series of elementary column operations:
\[B \sim C = \left( {\begin{array}{*{20}{c}}
h_1&{0}&{\cdots}&{0}&{0}\\
{0}& h_2 &{\cdots}&{0}&{0}\\
{\vdots}&{\vdots}&\ddots&{\vdots}&{\vdots}\\
{0}&{0}&{\cdots}&h_{l-1}&0 \\
{0}&{0}&{\cdots}& 0 & (z_1-f(z_2,\ldots,z_n))v'_{l,l} 
\end{array}} \right).\]
Since $\det(C) = \det(B) = (z_1-f(z_2,\ldots,z_n))h_1\cdots h_{l-1}h_l$, we have $v'_{l,l} = h_l$.
\end{proof}

The following corollary follows immediately from Theorem \ref{th1}.
\begin{corollary}\label{co3.1}
Let
$
B =diag\{ h_1,h_2,\ldots,h_l\}\cdot V \cdot diag\{1,\ldots,1,z_1\},
$
where $h_1 ,h_2,\ldots, h_l\in K[z_n]$ satisfy $h_1\mid h_2 \mid \cdots \mid h_l$ and $V \in {K[z_2,\ldots,z_n]^{l \times l}}$ is a unimodular matrix. If $d_k(B) = h_1\cdots h_k$ and $J_k(B)=K[z_1,z_2,\ldots,z_n]$, $k = 1, \ldots, l-1$, then
$
B\sim diag\{ h_1,h_2,\ldots,z_1h_l\}.
$
\end{corollary}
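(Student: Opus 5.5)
The corollary is the special case $f(z_2,\ldots,z_n)\equiv 0$ of Theorem \ref{th1}, so the plan is simply to specialize. Take $f=0$, which lies in $K[z_2,\ldots,z_n]$. Then $z_1-f(z_2,\ldots,z_n)=z_1$, and the matrix $B$ of the corollary coincides with the matrix $B$ of Theorem \ref{th1}.

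Next we check the hypotheses. The $h_i$ lie in $K[z_n]$ and satisfy $h_1\mid h_2\mid\cdots\mid h_l$. The matrix $V\in K[z_2,\ldots,z_n]^{l\times l}$ is unimodular. The conditions $d_k(B)=h_1\cdots h_k$ and $J_k(B)=K[z_1,z_2,\ldots,z_n]$ for $k=1,\ldots,l-1$ are assumed verbatim. So every hypothesis of Theorem \ref{th1} holds, and it gives
$$B\sim diag\{h_1,h_2,\ldots,(z_1-0)h_l\}=diag\{h_1,h_2,\ldots,z_1h_l\}.$$

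There is no real obstacle here. The only point to confirm is that nothing in the proof of Theorem \ref{th1} requires $f$ to be nonconstant or nonzero. It does not. The argument uses only the following facts:
\begin{itemize}
\item $z_1-f$ vanishes at the points $(f(z_{20},\ldots,z_{n0}),z_{20},\ldots,z_{n0})$, which here are $(0,z_{20},\ldots,z_{n0})$;
\item $B_1$ lies in $K[z_2,\ldots,z_n]$, i.e.\ it has no $z_1$ entries;
\item Lemma \ref{le8} applies to $B_1$, whose determinantal divisors lie in $K[z_n]$ (after renaming variables).
\end{itemize}
All of these remain valid when $f=0$.
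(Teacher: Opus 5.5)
Your proposal is correct and matches the paper, which states that the corollary follows immediately from Theorem~\ref{th1}. You specialize to $f\equiv 0$ in the same way, and your check that the proof of Theorem~\ref{th1} never needs $f$ to be nonzero is accurate.
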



We now prove the following theorem, which provides a positive answer to Problem \ref{pro1}.

\begin{theorem}\label{th6}
Let $F \in {K[z_1,z_2,\ldots,z_n]^{l \times l}}$ and $\det(F)=(z_1-f(z_2,\ldots,z_n))h$, where $h\in K[z_n]$. Then $F$ is equivalent to its Smith form if and only if $J_k(F) = K[{z_1},{z_2}, \ldots ,{z_n}]$ for $k=1,\ldots,l$.
\end{theorem}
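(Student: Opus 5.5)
Necessity is immediate from Lemma \ref{le2}: if $F\sim S$ with $S$ the Smith form, then $J_k(F)=J_k(S)$. For a diagonal matrix with divisibility chain $\Phi_1\mid\cdots\mid\Phi_l$, one $k\times k$ minor equals $d_k(S)=\Phi_1\cdots\Phi_k$, so the corresponding reduced minor is $1$ and $J_k(S)=K[z_1,\ldots,z_n]$. For sufficiency, the plan is to split off the factor $z_1-f$ on the right of $F$ using Lemma \ref{le6}, so that what remains has determinant $h\in K[z_n]$. The remaining factor is then handled by Lemma \ref{le8} (after renaming variables so that $z_n$ plays the role of $z_1$), and the pieces are reassembled via Theorem \ref{th1}.

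\textbf{Step 1: splitting off $z_1-f$.} Write $p=z_1-f(z_2,\ldots,z_n)$; it is irreducible since it has degree one in $z_1$. Because $p$ divides $\det(F)$ exactly once, there is a unique index $k$ with $p\nmid d_k(F)$ and $p\mid d_{k+1}(F)$. By the divisibility $\Phi_1\mid\cdots\mid\Phi_l$, $p$ can appear only in $\Phi_l$, so $k=l-1$ and $d_i(F)\in K[z_n]$ for $i\le l-1$. I would work with the transpose (or columns) so as to obtain a right factorization $F=G\cdot W\,\mathrm{diag}\{1,\ldots,1,p\}$ with $W\in K[z_2,\ldots,z_n]^{l\times l}$ unimodular. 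Lemma \ref{le6} needs $d_{l-1}=1$, which fails here. The fix is to repeat its argument: specialize $z_1=f$, and use $J_{l-1}(F)=K[z_1,\ldots,z_n]$ together with $d_{l-1}(F)\in K[z_n]$, a polynomial independent of $z_1$. Then $F(f,z_2,\ldots,z_n)$ has rank exactly $l-1$ with $J_{l-1}$ trivial. Lemma \ref{le61} applied to its columns gives a ZRP kernel vector, which Lemma \ref{le1} completes to a unimodular $W^{-1}$ over $K[z_2,\ldots,z_n]$. This is the main obstacle: checking that the common-zero argument in Lemma \ref{le6} survives when $d_{l-1}(F)$ is a nonconstant polynomial in $z_n$ alone. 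The idea is that a common zero of the specialized reduced minors lifts to a common zero of the reduced minors of $F$, because the gcd $d_{l-1}$ does not involve $z_1$, so dividing by it commutes with the specialization.

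\textbf{Step 2: reducing $G$.} After Step 1, $F=G\,W\,\mathrm{diag}\{1,\ldots,1,p\}$ with $\det G=h\in K[z_n]$. The two factors $G$ and $W\,\mathrm{diag}\{1,\ldots,1,p\}$ have coprime determinants $h$ and $p$ (unless $h$ is constant, which is a trivial case). Lemma \ref{le9} then gives $J_i(G)=K[z_1,\ldots,z_n]$ for all $i$ and $d_i(F)=d_i(G)\,d_i(W\,\mathrm{diag}\{1,\ldots,1,p\})$. By Lemma \ref{le8} with $z_n$ in the role of $z_1$, $G=P\,\mathrm{diag}\{h_1,\ldots,h_l\}\,Q$ with $P,Q$ unimodular and $h_i\in K[z_n]$ forming a divisibility chain.

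\textbf{Step 3: reassembling.} We have $P^{-1}F=\mathrm{diag}\{h_i\}\,Q\,W\,\mathrm{diag}\{1,\ldots,1,p\}$, but $QW$ need not lie in $K[z_2,\ldots,z_n]$, so Theorem \ref{th1} does not apply directly. Two routes are available. The first is to perform Step 1 after Step 2, that is, to split $p$ off a matrix already of the form $\mathrm{diag}\{h_i\}\cdot(\ast)$. The second is to rerun the proof of Theorem \ref{th1}, which only needs the first $l-1$ columns of $B$ to be free of $p$ and the last column to be divisible by $p$. Here $B=P^{-1}F\,W^{-1}$ has exactly this shape: its last column is $p$ times a column, and the first $l-1$ columns are $\mathrm{diag}\{h_i\}QW$ restricted. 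I would follow the second route. The column-gcd argument gives $d_i(B_1)=d_i(F)$ and $J_i(B_1)$ trivial; the key point is that any common zero of the reduced minors of $B_1$ extends to one with $z_1=f$, making it a common zero of all reduced minors of $B$. Since $d_{l-1}(B_1)\in K[z_n]$, Lemma \ref{le8} applies to $B_1$, and the column clearing and determinant comparison are as in Theorem \ref{th1}. This yields $F\sim\mathrm{diag}\{h_1,\ldots,h_{l-1},p\,h_l\}$, which is the Smith form.
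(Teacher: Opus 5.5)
Your Steps 1 and 2 are sound. One correction: the construction in Step 1 actually yields $F=G\,\mathrm{diag}\{1,\ldots,1,p\}\,W$, not $G\,W\,\mathrm{diag}\{1,\ldots,1,p\}$, so $B=P^{-1}FW^{-1}=\mathrm{diag}\{h_i\}\,Q\,\mathrm{diag}\{1,\ldots,1,p\}$.

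The gap is in Step 3, in the route you chose. The proof of Theorem~\ref{th1} needs more than the first $l-1$ columns of $B$ being free of $p$. It needs them to lie in $K[z_2,\ldots,z_n]$, because only then can a common zero of the reduced minors of $B_1$ be moved onto the hypersurface $z_1=f$. Here $Q$ comes from Lemma~\ref{le8} and is unimodular over the full ring, so $B_1$ depends on $z_1$ and $J_i(B_1)$ need not be trivial. The claim $d_i(B_1)=d_i(F)$ does survive: an irreducible factor of $d_i(B_1)$ dividing no $h_j$ would divide all $i\times i$ minors of the first $l-1$ columns of $Q$, which is impossible since these columns are ZRP. The claim about $J_i(B_1)$ does not survive.

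Concretely, take $n=2$, $z_1=x$, $z_2=y$, $f=0$ and $F=\begin{pmatrix} x-1 & x\\ -y & 0\end{pmatrix}$. Then $\det F=xy$, $J_1(F)=(x-1,x,y)=K[x,y]$ and $J_2(F)=K[x,y]$. The kernel of $F(0,y)$ is spanned by $(0,1)^T$, so Step 1 may return $W=I$. Then $B_1=P^{-1}(x-1,-y)^T$ for whatever $P$ Step 2 produces, so $d_1(B_1)=1$ but $J_1(B_1)=(x-1,y)\neq K[x,y]$. By Lemma~\ref{le2}, $B_1$ is not equivalent to $(1\ 0)^T$, so the column-clearing step cannot be carried out. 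The common zero $(1,0)$ lies off the line $x=0$ and cannot be moved there.

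The repair is what your first route gestures at, and it is what the paper does: before invoking Theorem~\ref{th1}, replace $Q$ by a unimodular matrix over $K[z_2,\ldots,z_n]$. Since $Q\,\mathrm{diag}\{1,\ldots,1,p\}\sim\mathrm{diag}\{1,\ldots,1,p\}$, it has $d_{l-1}=1$ and trivial $J_{l-1}$. Lemma~\ref{le6} then gives a unimodular $W'\in K[z_2,\ldots,z_n]^{l\times l}$ with
\[
W'Q\,\mathrm{diag}\{1,\ldots,1,p\}=\mathrm{diag}\{1,\ldots,1,p\}\,G', \qquad \det G'\in K\setminus\{0\}.
\]
Hence $B\sim\mathrm{diag}\{h_i\}\,(W')^{-1}\,\mathrm{diag}\{1,\ldots,1,p\}$, which has exactly the shape required by Theorem~\ref{th1}, with $d_k$ and $J_k$ inherited from $F$.

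The paper reaches the same configuration by a slightly longer path. It normalizes $p$ to $z_1$ with the automorphism $\theta$ and splits $z_1$ off on the left via Lemma~\ref{le5}. It then reduces the cofactor with Lemma~\ref{le8} and transposes to get $F^T\sim\mathrm{diag}\{h_i\}\,(U')^T\mathrm{diag}\{1,\ldots,1,z_1\}$. Finally it applies Lemma~\ref{le6} to the second factor before Corollary~\ref{co3.1}. Your right-sided Step 1 makes the transpose unnecessary.
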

\begin{proof}
Without loss of generality, assume that the Smith form of $F$ is
$$
S = diag\{h_1,h_2, \ldots,  (z_1-f(z_2,\ldots,z_n))h_l\},
$$
where $h_1,h_2,\ldots,h_l\in K[z_n]$ satisfy $h_1\mid h_2\mid \cdots \mid h_l$ and $h_1\cdots h_l=h$.

Necessity: If $F \sim S$, by Lemma \ref{le2}, $J_k(F) =J_k(S) = K[{z_1},{z_2}, \ldots ,{z_n}]$, where $k = 1, \ldots , l$.

Sufficiency: Let $\theta$ be the algebra automorphism induced by $\theta(z_1)=z_1-f(z_2,\ldots,z_n)$ and $\theta(z_i)=z_i$ for $i=2,\ldots, n$. To complete the proof, it suffices to prove that the matrix $F$ with $\det(F)=z_1h$ is equivalent to its Smith form $diag\{h_1,h_2, \ldots,  z_1h_l\}$.
Since $J_{l-1}(F) = K[{z_1},{z_2}, \ldots ,{z_n}]$ and $z_1 \mid \det(F)$, by Lemma \ref{le5}, we obtain that there exists a unimodular matrix $U \in K[{z_1},{z_2}, \ldots ,{z_n}]^{l\times l}$ such that $U\cdot F = diag\{1, \ldots,1, z_1\} \cdot {F_1}$, where $F_1 \in {K[{z_1},{z_2}, \ldots ,{z_n}]^{l \times l}}$. It is clear that $\det(F_1) = h$ and $\gcd(z_1, h) = 1$. By Lemma \ref{le9}, we have $d_k(F_1)= h_1\cdots h_k$ and $J_k(F_1) = K[{z_1},{z_2}, \ldots ,{z_n}]$ for $k=1,\ldots,l-1$. Further, by Lemma \ref{le8}, $F_1$ is equivalent to its Smith form, i.e., there exist two unimodular matrices $U',V' \in K[{z_1},{z_2}, \ldots ,{z_n}]^{l\times l}$ such that 
$$F_1 = U'\cdot diag\{h_1,h_2, \ldots,  h_l\}\cdot V'.$$
Hence, we can rewrite $F$ as follows: 
$$F = U^{-1}\cdot diag\{1, \ldots,1, z_1\}\cdot U'\cdot diag\{h_1,h_2, \ldots,  h_l\}\cdot V'.
$$
Next, we consider the transpose of matrix $F$. From the equation above, we have 
$$F^T\sim diag\{h_1,h_2,\ldots,h_2\}\cdot (U')^T \cdot diag\{1, \ldots,1, z_1\}.$$ Let $A \triangleq (U')^T\cdot diag\{1, \ldots,1, z_1\}$. Then $F^T\sim diag\{h_1,h_2,\ldots,h_2\}\cdot A$. From Lemma \ref{le9} again, we have $J_{l-1}(A) = K[{z_1},{z_2}, \ldots ,{z_n}]$ and $z_1 \mid \det(A)$. By Lemma \ref{le4}, $d_{l-1}(A) =1$. Therefore, by Lemma \ref{le6}, there exists a unimodular matrices $W \in K[{z_2}, \ldots ,{z_n}]^{l\times l}$ such that $W\cdot A = diag\{1, \ldots,1, z_1\} \cdot G$, where $G \in {K[{z_1},{z_2}, \ldots ,{z_n}]^{l \times l}}$. 
By using Corollary \ref{co3.1}, we have $$diag\{ h_1,h_2,\ldots,h_l\}\cdot W^{-1} \cdot diag\{1,\ldots,1,z_1\} \sim diag\{ h_1,h_2,\ldots,z_1h_l\}.$$
Therefore, we have $F^T \sim diag\{h_1,h_2,\ldots, z_1h_l\}$. It follows that $F \sim diag\{h_1,h_2,\ldots, z_1h_l\}$. 
\end{proof}

Subsequently, we extend the above result to the case of non-square and rank-deficient matrices with the help of the Lin-Bose theorem and Quillen-Suslin theorem.

\begin{lemma}(Lin-Bose Theorem, \cite{20})\label{le00}
Let $F \in {K[{z_1},{z_2}, \ldots ,{z_n}]^{l \times m}}(l<m)$ with rank $r$, where $1 \le r \le l$. If $J_r(F) = K[{z_1},{z_2}, \ldots ,{z_n}]$, then there exist $G_1 \in K[{z_1},{z_2}, \ldots ,{z_n}]^{l\times r}$ and $F_1 \in K[{z_1},{z_2}, \ldots ,{z_n}]^{r\times m}$ such that $F = G_1F_1$, where $d_r(G_1) = d_r(F)$ and $F_1$ is a ZLP matrix.
\end{lemma}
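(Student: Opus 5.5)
The Lin-Bose theorem is a factorization result, so I would build the factorization explicitly out of a Quillen-Suslin completion and a minor computation. Set $R=K[z_1,\ldots,z_n]$.

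\textbf{Step 1: a left annihilator.} Since $J_r(F)=R$, Lemma \ref{le61} gives a ZLP matrix $W\in R^{(l-r)\times l}$ with $WF=0_{l-r,m}$. By Lemma \ref{le1} applied to $W^T$, $W$ can be completed to a unimodular matrix
\[
P=\begin{pmatrix} W_1\\ W\end{pmatrix}\in R^{l\times l}.
\]
Then
\[
PF=\begin{pmatrix} W_1F\\ 0\end{pmatrix}.
\]
Put $\tilde F=W_1F\in R^{r\times m}$.

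\textbf{Step 2: transfer the invariants.} By Lemma \ref{le2}, $d_r(\tilde F)=d_r(F)$ and $J_r(\tilde F)=J_r(F)=R$. This uses that the $r\times r$ minors of $PF$ are exactly those of $\tilde F$, up to the zero minors. $\tilde F$ has full row rank $r$, and its $r\times r$ minors are $d_r(F)\,v_j$ with the $v_j$ generating $R$.

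\textbf{Step 3: extract the ZLP factor.} I need $\tilde F=D F_1$ with $D\in R^{r\times r}$, $\det D=d_r(F)$ up to a unit, and $F_1$ ZLP. Then
\[
F=P^{-1}\begin{pmatrix}D\\0\end{pmatrix}F_1,
\]
so I take $G_1=P^{-1}\begin{pmatrix}D\\0\end{pmatrix}$. By Cauchy-Binet its only nonzero $r\times r$ minors come from the $D$ block transformed by $P^{-1}$, and $d_r(G_1)=\det D=d_r(F)$.

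\textbf{Step 4: the main obstacle.} The hard step is the factorization $\tilde F=DF_1$ for a full row rank matrix whose reduced maximal minors generate the unit ideal. This is the genuine content of the Lin-Bose conjecture, proved by Pommaret, Wang-Feng and Srinivas.

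The approach I would try is as follows. Let $\mathcal M$ be the row module of $\tilde F$ and $N=\ker(\tilde F^T)$-style saturation, i.e. $N=\{x\in R^{1\times m}: \exists\, 0\neq c,\ cx\in\mathcal M\}$.
\begin{itemize}
\item Show $N$ is free of rank $r$, so $N$ is the row module of some $F_1$, which then gives $\tilde F=DF_1$.
\item Freeness follows from Quillen-Suslin once $N$ is projective. Projectivity is checked locally at every maximal ideal, where the unit-ideal condition on the reduced minors lets some $v_j$ be invertible. Locally, $\tilde F$ then factors as a square matrix times a matrix containing an identity block.
\item $F_1$ is ZLP because $R^{1\times m}/N$ is torsion-free with the same local splitting, so it is projective, which means $N$ is a direct summand.
\end{itemize}
If this local-global argument proves too heavy, I would instead simply cite the published proofs for this step.
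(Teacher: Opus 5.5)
The paper gives no proof of this lemma; it is simply imported from \cite{20}. Your proposal is therefore a genuinely different, self-contained route, and it is essentially correct.

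\textbf{Step 4.} This is the standard projective-module argument, and it works. If $v_J\notin\mathfrak m$, Cramer's rule shows that $\tilde F_J^{-1}\tilde F$ has entries $\pm v_{J'}/v_J\in R_{\mathfrak m}$, with $J'$ differing from $J$ in one column. It also has an identity block in the columns $J$. So its row module over $R_{\mathfrak m}$ is saturated, and since saturation commutes with localization, it equals $N_{\mathfrak m}$, a free direct summand of rank $r$. Hence $R^{1\times m}/N$ is projective, $N$ is free by Quillen--Suslin, and a basis matrix $F_1$ has a right inverse, i.e.\ is ZLP.

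\textbf{Routine points to write out.} First, that $\det D$ equals $d_r(F)$ up to a unit; this follows from $\det\tilde F_J=\det D\cdot\det F_{1,J}$ together with $\sum_J c_J\det F_{1,J}=1$. Second, in Step 1 apply Lemma \ref{le1} to $W$ itself (it is ZLP of size $(l-r)\times l$), not to $W^T$; this gives $W=(I_{l-r}\ \ 0)\,U^{-1}$.

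\textbf{I would drop Steps 1--3.} Lemma \ref{le61} is no easier than the statement. It follows from the very factorization you are proving: factor $F=G_1F_1$, factor $G_1$ from the other side, and complete the resulting ZRP factor. So relying on it risks circularity unless you have an independent proof.

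You do not need it, because your local argument applies verbatim to $F$ of rank $r$. At $\mathfrak m$, choose a reduced minor $v_{I,J}\notin\mathfrak m$. The rows $F_I$ span the row space of $F$ over the fraction field, and $F_{I,J}^{-1}F_I$ has entries $\pm v_{I,J'}/v_{I,J}\in R_{\mathfrak m}$ and an identity block. Thus the saturation of the row module of $F$ is locally a free direct summand of rank $r$. This gives $F=G_1F_1$ with $F_1$ ZLP directly. Then $\det F_{I,J}=\det G_{1,I}\det F_{1,J}$, together with the unit-ideal relation for the maximal minors of $F_1$, yields $d_r(G_1)=d_r(F)$ with no $P^{-1}$ bookkeeping.

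\textbf{What each route buys.} The paper's citation is short but opaque. Your argument shows exactly where $J_r(F)=R$ is used, namely a unit reduced minor at every maximal ideal. The cost is a non-explicit local--global step on top of Quillen--Suslin.
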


\begin{theorem}\label{th1.2}
Let $F \in {K[{z_1},{z_2}, \ldots ,{z_n}]^{l \times m}}(l<m)$ with rank $r$ and  $d_r(F) = (z_1-f(z_2,\ldots,z_n))h$, where $h\in K[z_n]$. Then $F$ is equivalent to its Smith form if and only if $J_k(F) = K[{z_1},{z_2}, \ldots ,{z_n}]$ for $k = 1, \ldots , r$.
\end{theorem}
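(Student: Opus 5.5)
Necessity follows from Lemma \ref{le2}: if $F\sim S$, where $S$ is the Smith form of $F$, then $J_k(F)=J_k(S)$ for every $k$. The $k\times k$ reduced minors of $S=(diag\{\Phi_1,\ldots,\Phi_r,0,\ldots,0\}\ \ 0_{l,m-l})$ include $1$ for each $k\le r$, namely the leading principal minor divided by $d_k(S)=\Phi_1\cdots\Phi_k$. Hence $J_k(F)=K[z_1,\ldots,z_n]$ for $k=1,\ldots,r$.

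For sufficiency, we reduce to the square full-rank case treated in Theorem \ref{th6}. Since $J_r(F)=K[z_1,\ldots,z_n]$, Lemma \ref{le00} gives a factorization $F=G_1F_1$ with $G_1\in K[z_1,\ldots,z_n]^{l\times r}$, $d_r(G_1)=d_r(F)$, and $F_1\in K[z_1,\ldots,z_n]^{r\times m}$ ZLP. By Lemma \ref{le1} there is a unimodular $U$ with $F_1U=(I_r\ 0_{r,m-r})$, so $FU=(G_1\ 0_{l,m-r})$.

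Next we handle the row side when $r<l$. Every $k\times k$ minor of $FU$ is a minor of $G_1$, and the $r\times r$ minors of $G_1$ are exactly the $r\times r$ minors of $F$ up to the unimodular transformation. Since equivalence preserves $d_k$ and $J_k$ (Lemma \ref{le2}), we get $d_k(G_1)=d_k(F)$ and $J_k(G_1)=J_k(F)=K[z_1,\ldots,z_n]$ for $k=1,\ldots,r$. In particular $J_r(G_1)$ is the unit ideal. Lemma \ref{le61} then yields a ZLP matrix $W\in K[z_1,\ldots,z_n]^{(l-r)\times l}$ with $WG_1=0$. By Lemma \ref{le1}, $W$ can be completed to a unimodular $P\in K[z_1,\ldots,z_n]^{l\times l}$ whose last $l-r$ rows are $W$. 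Then $PG_1=\begin{pmatrix} G_2\\ 0\end{pmatrix}$ with $G_2\in K[z_1,\ldots,z_n]^{r\times r}$. Applying Lemma \ref{le2} once more, $\det(G_2)=d_r(G_2)=d_r(F)=(z_1-f(z_2,\ldots,z_n))h$ up to a nonzero constant, and $J_k(G_2)=K[z_1,\ldots,z_n]$ for all $k\le r$.

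Theorem \ref{th6}, after absorbing the constant into a unimodular factor, now gives $G_2\sim diag\{\Phi_1,\ldots,\Phi_r\}$. Substituting back,
\[
F\sim\left(\begin{array}{cc} G_2 & 0\\ 0 & 0\end{array}\right)\sim\left(\begin{array}{cc} diag\{\Phi_1,\ldots,\Phi_r\} & 0\\ 0 & 0\end{array}\right),
\]
and the last matrix is the Smith form of $F$, since $d_k$ is an equivalence invariant.

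The main point requiring care is the bookkeeping that the hypotheses transfer from $F$ to the square block $G_2$. This works because $(G_2\ 0)$, padded with zero rows, is equivalent to $F$, so Lemma \ref{le2} transfers both $d_k$ and $J_k$ without any minor-by-minor computation. The minors of that padded matrix are exactly those of $G_2$ together with zeros, and zero minors do not change the gcd or the reduced-minor ideals. The case $r=l$ is the same argument with the row step skipped.
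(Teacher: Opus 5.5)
Your proposal is correct and follows the paper's proof. You use Lin--Bose plus Quillen--Suslin to clear the extra columns, reduce the rows to a square $r\times r$ block whose determinant and reduced-minor ideals are transferred from $F$ via Lemma~\ref{le2}, and then apply Theorem~\ref{th6}. The only variation is on the row side: you take a ZLP left annihilator from Lemma~\ref{le61} and complete it to a unimodular matrix, whereas the paper applies Lemma~\ref{le00} a second time (implicitly transposed, since it is stated for wide matrices) to get a ZRP left factor. Both routes give the same block form.
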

\begin{proof}
The necessity follows immediately from Lemma \ref{le2}. In the following we prove the sufficiency.
According to  Lemma \ref{le00}, there exist two matrices $G \in {K[{z_1},{z_2}, \ldots ,{z_n}]^{l \times r}}$ and $F_1 \in {K[{z_1},{z_2}, \ldots ,{z_n}]^{r \times m}}$ such that $F=GF_1$, where $F_1$ is a ZLP matrix. By using Lemma \ref{le1}, there exists a unimodular matrix $U_1 \in {K[{z_1},{z_2}, \ldots ,{z_n}]^{m \times m}}$ such that $F_1 \cdot U_1 = \left( {\begin{array}{*{20}{c}}
{{I_r}}&{{0_{r,m - r}}}
\end{array}} \right)$. Therefore, we have
\begin{equation}\label{eq2}
FU_1 = GF_1U_1 = G\left( {\begin{array}{*{20}{c}}
{{I_r}}&{{0_{r,m - r}}}
\end{array}} \right) = \left( {\begin{array}{*{20}{c}}
{{G}}&{{0_{r,m - r}}}
\end{array}} \right).
\end{equation}
From equation (\ref{eq2}), it is straightforward to show that $F$ is equivalent to $\left( {\begin{array}{*{20}{c}}
{{G}}&{{0_{r,m - r}}}
\end{array}} \right)$. Then by using Lemma \ref{le2}, we have $J_r(G) = J_r(F) = K[{z_1},{z_2}, \ldots ,{z_n}]$. Applying Lemma \ref{le00} again, we have that there exist two matrices $G' \in {K[{z_1},{z_2}, \ldots ,{z_n}]^{l \times r}}$ and $F'_1 \in {K[{z_1},{z_2}, \ldots ,{z_n}]^{r \times r}}$ such that $G=G'F'_1$, where $G'$ is a ZRP matrix. By Lemma \ref{le1}, there exists a unimodular matrix $U_2 \in {K[{z_1},{z_2}, \ldots ,{z_n}]^{l \times l}}$ such that $U_2  G' = \left( {\begin{array}{*{20}{c}}
{{I_r}}\\
{{0_{l-r, r}}}
\end{array}} \right)$.  Then pre-multiplying equation (\ref{eq2}) by matrix $U_2$, we have
\begin{align}\label{eq3}
U_2FU_1 = U_2GF_1U_1= U_2G'F'_1F_1U_1 = \left( {\begin{array}{*{20}{c}}
{{I_r}}\\
{{0_{l-r, r}}}
\end{array}}\right) F'_1 \left( {\begin{array}{*{20}{c}}
{{I_r}}&{{0_{r,m - r}}}
\end{array}} \right)= \left( {\begin{array}{*{20}{c}}
{{F'_1}}&{{0_{r,m - r}}}\\
{0_{l-r, r}}&0_{{l-r,m - r}}
\end{array}} \right).\nonumber
\end{align}
Since $U_1,U_2$ are unimodular matrices, we have
$$F\sim \left( {\begin{array}{*{20}{c}}
{{F'_1}}&{{0_{r,m - r}}}\\
{0_{l-r, r}}&0_{{l-r,m - r}}
\end{array}} \right).$$
Furthermore, by using Lemma \ref{le2}, $d_r(F)=d_r(F'_1)=\det(F'_1)=(z_1-f(z_2,\ldots,z_n))h$ and $J_k(F)=J_k(F'_1) = K[{z_1},{z_2}, \ldots ,{z_n}]$, where $k=1,\ldots ,r$. Assume that the Smith form of $F'_1$ is $D$, by using Theorem \ref{th6}, we have $F'_1$ is equivalent to $D$. Therefore, $F$ is equivalent to its Smith form $\left( {\begin{array}{*{20}{c}}
{{D}}&{{0_{r,m - r}}}\\
{0_{l-r, r}}&0_{{l-r,m - r}}
\end{array}} \right)$.
\end{proof}

\begin{corollary}\label{co1}
Let $F \in {K[{z_1},{z_2}, \ldots ,{z_n}]^{l \times m}}(l<m)$ with rank $r$ and  $d_r(F) = (z_1-f(z_2,\ldots,z_n))h$, where $h\in K[z_j]$ for some fixed $j\in \{2,\ldots,n\}$. Then $F$ is equivalent to its Smith form if and only if $J_k(F) = K[{z_1},{z_2}, \ldots ,{z_n}]$ for $k = 1, \ldots , r$.
\end{corollary}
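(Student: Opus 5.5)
The natural route is to reduce Corollary \ref{co1} to Theorem \ref{th1.2} by a change of variables. The tool is an algebra automorphism of $K[z_1,\ldots,z_n]$ that permutes $z_j$ and $z_n$ and fixes $z_1$. So let $\sigma$ be the $K$-algebra automorphism with $\sigma(z_j)=z_n$, $\sigma(z_n)=z_j$, and $\sigma(z_i)=z_i$ otherwise. If $j=n$ there is nothing to prove, so assume $2\le j<n$.

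The first step is to record how $\sigma$ interacts with the relevant notions. Applied entrywise, $\sigma$ preserves products, determinants and minors. It maps units to units, hence unimodular matrices to unimodular matrices, so $F\sim S$ if and only if $\sigma(F)\sim\sigma(S)$. Being a ring automorphism, it preserves divisibility and g.c.d.'s up to units, so $d_k(\sigma(F))=\sigma(d_k(F))$ up to a nonzero constant. Consequently the reduced minors of $\sigma(F)$ are the images of those of $F$, and $J_k(\sigma(F))=\sigma(J_k(F))$. In particular $J_k(F)$ is the unit ideal exactly when $J_k(\sigma(F))$ is. It also follows that the Smith form of $\sigma(F)$ is $\sigma$ applied to the Smith form of $F$, and that $\mathrm{rank}\,\sigma(F)=r$.

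The second step is to check the hypothesis of Theorem \ref{th1.2} for $\sigma(F)$. We have
\[
d_r(\sigma(F))=\bigl(z_1-\tilde f(z_2,\ldots,z_n)\bigr)\,\tilde h,
\]
where $\tilde f=\sigma(f)$ still involves only $z_2,\ldots,z_n$, because $\sigma$ fixes $z_1$ and permutes $\{z_2,\ldots,z_n\}$. Also $\tilde h=\sigma(h)\in K[z_n]$. Theorem \ref{th1.2} therefore applies to $\sigma(F)$.

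The final step assembles the equivalences: $F$ is equivalent to its Smith form $S$ iff $\sigma(F)\sim\sigma(S)$, iff $J_k(\sigma(F))$ is the unit ideal for $k=1,\ldots,r$ (by Theorem \ref{th1.2}), iff $J_k(F)$ is the unit ideal for $k=1,\ldots,r$.

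The only point needing care is the claim that $\sigma$ commutes with forming the Smith form and the reduced-minor ideals. This reduces to the fact that ring automorphisms preserve g.c.d.'s up to units, and the Smith form is only defined up to such units anyway. I expect this to be routine rather than a genuine obstacle.
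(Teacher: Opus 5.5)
Your proposal is correct and is essentially the paper's argument: the paper proves the corollary in one line by appeal to Theorem \ref{th1.2}, and your automorphism $\sigma$ swapping $z_j$ and $z_n$ is exactly the implicit relabeling of variables that this appeal relies on. You also spell out the invariances the paper leaves unstated: minors, $d_k$ up to constants, $J_k$, rank, unimodularity and the Smith form are all carried over by $\sigma$. These checks mirror the paper's own use of automorphisms in Lemma \ref{le888}.
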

\begin{proof}
The result follows by virtue of Theorem \ref{th1.2}.
\end{proof}

 Let $K[{z^c_i}] = K[{z_1},\ldots,{z_{i-1}},z_{i+1}, \ldots ,{z_n}]$ be the polynomial ring that is independent of variable $z_i$. In the following, we give a generalization of Theorem \ref{th1}.

\begin{theorem}\label{th4}
Let
\begin{small}
$$
B =\left( {\begin{array}{*{20}{c}}
h_1&{}&{}&{}\\
{}& h_2 &{}&{}\\
{}&{}& \ddots &{}\\
{}&{}&{}&{h_l}(z_{n-1}-f_{n-1}(z_n))\cdots (z_j-f_{j}(z_{j+1},\ldots,z_n))
\end{array}} \right)\cdot  V\cdot   \left( {\begin{array}{*{20}{c}}
{I_{l-1}}&{}\\
{}&{z_{j-1}-f_{j-1}(z_j,z_{j+1},\ldots,z_n)}
\end{array}} \right),
$$
\end{small}
where $h_1 ,h_2,\ldots, h_l\in K[z_n]$ satisfy $h_1\mid h_2 \mid \cdots \mid h_l$,  $j\in \{2,\ldots,n-1\}$ and $V \in {K[z^c_{j-1}]^{l \times l}}$ is a unimodular matrix. If $d_k(B) = h_1\cdots h_k$ and $J_k(B)=K[z_1,z_2,\ldots,z_n]$, $k = 1, \ldots, l-1$, then
$$
B\sim \left( {\begin{array}{*{20}{c}}
h_1&{}&{}&{}\\
{}& h_2 &{}&{}\\
{}&{}& \ddots &{}\\
{}&{}&{}&{h_l(z_{n-1}-f_{n-1}(z_n))\cdots (z_{j-1}-f_{j-1}(z_j,z_{j+1},\ldots,z_n))}
\end{array}} \right).
$$
\end{theorem}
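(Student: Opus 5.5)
I will follow the proof of Theorem \ref{th1} step by step. Two changes are needed. First, $z_{j-1}-f_{j-1}$ now plays the role of $z_1-f$. Second, the last diagonal entry of the left factor is no longer in $K[z_n]$. Write $p=z_{j-1}-f_{j-1}(z_j,\ldots,z_n)$ and $\tilde h_l=h_l(z_{n-1}-f_{n-1})\cdots(z_j-f_j)$. Every entry of $D=diag\{h_1,\ldots,h_{l-1},\tilde h_l\}$ lies in $K[z_j,\ldots,z_n]\subset K[z^c_{j-1}]$, and $V$ has entries in $K[z^c_{j-1}]$. Hence the submatrix $B_1$ formed by the first $l-1$ columns of $B$ has entries in $K[z^c_{j-1}]$. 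Every $i\times i$ minor of $B$ that uses the last column is divisible by $p$.

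The first step is to show $d_i(B_1)=d_i(B)=h_1\cdots h_i$ for $i\le l-1$. Write $d_i(B_1)=q_i d_i(B)$ with $q_i\in K[z^c_{j-1}]$. Suppose $q_i$ is nonconstant. Take a common zero of $q_i$ in the variables other than $z_{j-1}$. Then solve $z_{j-1}=f_{j-1}(z_j,\ldots,z_n)$, which is possible because $f_{j-1}$ does not involve $z_{j-1}$. This gives a point of $\overline{K}^n$ where all $i\times i$ reduced minors of $B$ vanish. That contradicts $J_i(B)=K[z_1,\ldots,z_n]$. The same argument, with a common zero of the reduced minors of $B_1$ in place of a zero of $q_i$, gives $J_i(B_1)=K[z_1,\ldots,z_n]$ for $i\le l-1$.

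The next step, which I expect to be the main obstacle, is to reduce $B_1$ to diagonal form. In Theorem \ref{th1} this used Lemma \ref{le8}, and that lemma needs $d_{l-1}(B_1)\in K[z_1]$ after renaming. Here $d_{l-1}(B_1)=h_1\cdots h_{l-1}\in K[z_n]$, so I will apply Lemma \ref{le8} with the roles of $z_1$ and $z_n$ interchanged by a variable permutation. This is a ring automorphism, so it preserves equivalence, $d_k$ and $J_k$. The matrix $B_1$ is $l\times(l-1)$, whereas Lemma \ref{le8} is stated for $F\in K[z_1,\ldots,z_n]^{l\times m}$ of rank $r$, so I transpose $B_1$ first. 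The lemma then gives unimodular $U_1,U_2$ with $U_1B_1U_2=\left(\begin{array}{c} diag\{h_1,\ldots,h_{l-1}\}\\ 0\end{array}\right)$. The point to check is that no other hypothesis of Lemma \ref{le8} is lost: it uses only the rank, the condition $d_r\in K[\text{one variable}]$, and the vanishing of all $J_k$. All of these hold here, because the $(l-1)$-st determinantal divisor of $B_1$ involves only $z_n$ even though $\tilde h_l$ does not.

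Finally, set $U_2'=diag\{U_2,1\}$. Then $U_1BU_2'$ has $diag\{h_1,\ldots,h_{l-1}\}$ in its upper left block, a zero row below that block, and last column $p\,(v'_{1,l},\ldots,v'_{l,l})^T$, where $(v'_{1,l},\ldots,v'_{l,l})^T=U_1 D\left(\begin{array}{c}V_2\\V_4\end{array}\right)$. For $i\le l-1$ the $i$-th row of $U_1BU_2'$ has only two nonzero entries, $h_i$ and $p\,v'_{i,l}$. Its $2\times2$ minors against the last row, $h_i\,p\,v'_{l,l}$ and $p\,v'_{i,l}\cdot 0$, together with the other minors, are controlled by the known $d_k$. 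I will use that $d_{l-1}(U_1BU_2')=h_1\cdots h_{l-1}$, the same minor argument as in Theorem \ref{th1}, to get $h_i\mid p\,v'_{i,l}$. Since $\gcd(h_i,p)=1$, this gives $h_i\mid v'_{i,l}$. Column operations then clear the last column above the $(l,l)$ entry, and $B\sim diag\{h_1,\ldots,h_{l-1},p\,v'_{l,l}\}$. Comparing determinants, $\det B=h_1\cdots h_{l-1}\tilde h_l\,p\det V$, so $v'_{l,l}=\tilde h_l$ up to a nonzero constant. This is the claimed form.
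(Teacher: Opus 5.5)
Your proposal is correct and follows the paper's proof essentially step for step: you show $d_i(B_1)=d_i(B)$ and $J_i(B_1)=K[z_1,\ldots,z_n]$ by lifting a zero in the variables other than $z_{j-1}$ to a zero of $z_{j-1}-f_{j-1}$, reduce $B_1$ via Lemma \ref{le8} (you make the transposition and variable swap explicit, which the paper leaves silent), clear the last column, and compare determinants. The only loose spot is $h_i\mid p\,v'_{i,l}$, which the paper simply asserts and your $2\times 2$ minors do not give. The $(l-1)\times(l-1)$ minor of $U_1BU_2'$ on rows $1,\ldots,l-1$ and columns $\{1,\ldots,l\}\setminus\{i\}$ equals $\pm p\,v'_{i,l}\prod_{k\neq i,\,k\le l-1}h_k$, so divisibility by $d_{l-1}=h_1\cdots h_{l-1}$ gives exactly $h_i\mid p\,v'_{i,l}$.
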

\begin{proof}
Let $\alpha = (z_{n-1}-f_{n-1}(z_n))\cdots (z_j-f_{j}(z_{j+1},\ldots,z_n))$, $\beta = z_{j-1}-f_{j-1}(z_j,z_{j+1},\ldots,z_n)$ and 
$V = (v_{ij})_{l\times l} =  \left( {\begin{array}{*{20}{c}}
{{V_1}}&{{V_2}}\\
{{V_3}}&{{V_4}}
\end{array}} \right),$
where $V_1\in {K[z^c_{j-1}]^{(l-1) \times (l-1)}}$, $V_2\in {K[z^c_{j-1}]^{(l-1) \times 1}}$, $V_3\in {K[z^c_{j-1}]^{1 \times (l-1)}}$ and $V_4=v_{l,l}\in {K[z^c_{j-1}]^{1 \times 1}}$. Then we have
\begin{align}
B = \left( {\begin{array}{*{20}{c}}
h_1v_{1,1}&{h_1v_{1,2}}&{\cdots}& {h_1v_{1,l-1}} &\beta h_1{v_{1,l}}\\
{h_2v_{2,1}}& h_2v_{2,2} &{\cdots}&{h_2v_{2,l-1}} &\beta h_2{v_{2,l}}\\
{\vdots}&{\vdots}& \ddots &{\vdots}&{\vdots}\\
{\alpha h_lv_{l,1}}&{\alpha h_lv_{l,2}}&{\cdots}&{\alpha h_lv_{l,l-1}} &{\alpha \beta h_lv_{l,l}}
\end{array}} \right).\nonumber
\end{align}
In addition, let $B_1$ denote the submatrix formed by the first $l-1$ columns of $B$:
\[B_1 = \left( {\begin{array}{*{20}{c}}
h_1v_{1,1}&{h_1v_{1,2}}&{\cdots}&h_1{v_{1,l-1}}\\
{h_2v_{2,1}}& h_2v_{2,2} &{\cdots}&h_2{v_{2,l-1}}\\
{\vdots}&{\vdots}& \ddots &{\vdots}\\
{\alpha h_lv_{l,1}}&{\alpha h_lv_{l,2}}&{\cdots}&{\alpha h_lv_{l,l-1}}
\end{array}} \right).\]
Employing a method analogous to the proof of Theorem \ref{th1}, we can also demonstrate that $d_i(B_1) =  d_i(B) = h_1\cdots h_i$ and $J_i(B_1) =  J_i(B)=K[{z_1},{z_2}, \ldots ,{z_n}]$ for $i=  1, \ldots,l-1$.

According to Lemma \ref{le8}, there are two unimodular matrices $U_1\in {K[{z_1},{z_2}, \ldots ,{z_n}]^{l \times l}}$ and $U_2\in {K[{z_1},{z_2}, \ldots ,{z_n}]^{(l-1) \times (l-1)}}$ such that
\[U_1B_1U_2 = 
\left( {\begin{array}{*{20}{c}}
h_1 & \cdots & 0\\
\vdots & \ddots & \vdots\\
0 & \cdots & h_{l-1}\\
0 & \cdots & 0
\end{array}} \right).\]
Let
$U'_2 = \left( {\begin{array}{*{20}{c}}
{{U_2}}&{}\\
{}&{{1}}
\end{array}} \right)$. 
Then
\begin{align}
U_1BU'_2 = \left( {\begin{array}{*{20}{c}}
h_1&{0}&{\cdots}&{0}&{\beta v'_{1,l}}\\
{0}& h_2 &{\cdots}&{0}&{\beta v'_{2,l}}\\
{\vdots}&{\vdots}&\ddots&{\vdots}&{\vdots}\\
{0}&{0}&{\cdots}&h_{l-1}&\beta v'_{l-1,l} \\
{0}&{0}&{\cdots}& 0 &\beta v'_{l,l} 
\end{array}} \right),\nonumber
\end{align}
where
$(v'_{1,l},v'_{2,l},\ldots,v'_{l,l})^T = U_1\cdot diag\{ h_1,\ldots,h_{l-1},\alpha h_l\}\cdot\left( {\begin{array}{*{20}{c}}
{{V_2}}\\
{{V_4}}
\end{array}} \right).$
Since $h_1 \mid v{'_{1,l}},h_2 \mid v{'_{2,l}},\ldots, h_{l-1} \mid v{'_{l-1,l}}$, we can derive the following equivalence relation after a series of elementary column operations:
\[B \sim C = \left( {\begin{array}{*{20}{c}}
h_1&{0}&{\cdots}&{0}&{0}\\
{0}& h_2 &{\cdots}&{0}&{0}\\
{\vdots}&{\vdots}&\ddots&{\vdots}&{\vdots}\\
{0}&{0}&{\cdots}&h_{l-1}&0 \\
{0}&{0}&{\cdots}& 0 & \beta v'_{l,l} 
\end{array}} \right).\]
Since $\det(C) = \det(B) =\alpha \beta h_1\cdots h_{l-1}h_l$, we have $v'_{l,l} = \alpha h_l$.
Therefore, it follows that
$$
B\sim \left( {\begin{array}{*{20}{c}}
h_1&{}&{}&{}\\
{}& h_2 &{}&{}\\
{}&{}& \ddots &{}\\
{}&{}&{}&{h_l(z_{n-1}-f_{n-1}(z_n))\cdots (z_{j-1}-f_{j-1}(z_j,z_{j+1},\ldots,z_n))}
\end{array}} \right).
$$
\end{proof}

Using the results above, we now consider the equivalence of another class of multivariate polynomial matrices to their Smith forms.
\begin{theorem}\label{th66}
Let $F \in {K[z_1,z_2,\ldots,z_n]^{l \times l}}$ and $\det(F)=(z_1-f_1(z_2,\cdots,z_n))\cdots (z_{n-1}-f_{n-1}(z_n))h$, where $h\in K[z_n]$. Then $F$ is equivalent to its Smith form if and only if $J_k(F) = K[{z_1},{z_2}, \ldots ,{z_n}]$ for $k=1,\ldots,l$.
\end{theorem}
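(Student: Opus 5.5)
Necessity is immediate from Lemma \ref{le2}, since the Smith form has all reduced minors generating the unit ideal. For sufficiency I will argue by induction on the number of linear-type factors. Write $p_i = z_i - f_i(z_{i+1},\ldots,z_n)$ for $i=1,\ldots,n-1$. The factors $p_1,\ldots,p_{n-1}$ are irreducible and pairwise coprime, and each is coprime to $h$, because each $p_i$ has degree one in $z_i$ and involves no $z_k$ with $k<i$. Because $\det(F)$ is squarefree in the $p_i$, each $p_i$ divides only $d_l(F)$. The Smith form is therefore forced to be $S=diag\{h_1,\ldots,h_{l-1},h_l p_1\cdots p_{n-1}\}$ with $h_i\in K[z_n]$ and $h_1\mid\cdots\mid h_l$. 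The precise statement I will prove by descending induction on $j$ is the following: if $\det(F)=p_j p_{j+1}\cdots p_{n-1}h$ and all $J_k(F)$ are the unit ideal, then $F\sim diag\{h_1,\ldots,h_{l-1},h_l p_j\cdots p_{n-1}\}$.

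The base case uses only factors in $K[z_{n-1},z_n]$. It follows from Theorem \ref{th6} applied with $z_{n-1}$ playing the role of $z_1$ in the smaller set of variables. Alternatively it is the case $j=n-1$ of the inductive step below with empty $\alpha$.

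For the inductive step I copy the proof of Theorem \ref{th6}. First apply the automorphism $\theta_j: z_j\mapsto z_j-f_j(z_{j+1},\ldots,z_n)$, fixing all other variables. Its inverse sends $p_j$ to $z_j$. It sends each $p_i$ with $i>j$ to itself, since those $p_i$ do not involve $z_j$, and it leaves $h$ unchanged. So we may assume $p_j=z_j$. Since $J_{l-1}(F)$ is the unit ideal and $z_j\mid d_l(F)$, the $z_j$-analogue of Lemma \ref{le5} (or Lemma \ref{le6} with $f=0$) gives $UF=diag\{1,\ldots,1,z_j\}F_1$. Then $\det F_1=p_{j+1}\cdots p_{n-1}h$ is coprime to $z_j$. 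Lemma \ref{le9} gives $d_k(F_1)=h_1\cdots h_k$ for $k<l$ and $J_k(F_1)$ the unit ideal for all $k$. The induction hypothesis then yields $F_1=U'\,diag\{h_1,\ldots,h_{l-1},\alpha h_l\}\,V'$, where $\alpha=p_{j+1}\cdots p_{n-1}$.

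Next pass to the transpose, as in Theorem \ref{th6}. We get $F^T\sim diag\{h_1,\ldots,\alpha h_l\}\,A$ with $A=(U')^T diag\{1,\ldots,1,z_j\}$. Lemma \ref{le9} and Lemma \ref{le4} give $d_{l-1}(A)=1$ and $J_{l-1}(A)$ the unit ideal. Lemma \ref{le6}, applied with $z_j$ as the distinguished variable, then gives $W\in K[z^c_j]^{l\times l}$ unimodular with $WA=diag\{1,\ldots,1,z_j\}G$. Comparing determinants shows $G$ is unimodular, since $\det A = c z_j$ for a nonzero constant $c$. Hence $F^T\sim diag\{h_1,\ldots,\alpha h_l\}W^{-1}diag\{1,\ldots,1,z_j\}$. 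This matrix has exactly the shape in Theorem \ref{th4} with $j$ shifted by one, i.e. $\beta=z_j$ and $V=W^{-1}\in K[z^c_j]^{l\times l}$. Transported back through $\theta_j$, $\beta$ becomes $p_j$; the unimodular $V$ is only guaranteed to lie in $K[z^c_j]$ before transport, but Theorem \ref{th4}'s proof uses only that $V$ does not involve $z_j$, so I will apply it in the $\theta_j$-coordinates. Its hypotheses $d_k=h_1\cdots h_k$ and unit $J_k$ for $k<l$ hold because this matrix is equivalent to $F^T$ (Lemma \ref{le2}). Theorem \ref{th4} then gives $F^T\sim diag\{h_1,\ldots,h_l\alpha z_j\}$, and transposing finishes the step.

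I expect the main obstacle to be checking that Lemma \ref{le6} and Theorem \ref{th4} really apply with $z_j$ in place of $z_1$ and $z_{j-1}$ after the change of variables. In Lemma \ref{le6} the argument needs: evaluating at $z_j=0$, the $k\times k$ minors have no common zero and the rank drops to $k$, and then Lemmas \ref{le61} and \ref{le1} hold over $K[z^c_j]$. I would verify this by repeating that proof verbatim with $z_j$ singled out. The other point to check is the claim $d_i(B_1)=d_i(B)$ inside Theorem \ref{th4}. It requires the common zero of the reduced minors of $B_1$ to extend to a common zero of $\beta$, and this works because $\beta=z_j$ is free in a variable that $B_1$ does not involve.
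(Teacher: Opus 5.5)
Your argument is correct and uses the same machinery as the paper's proof: a triangular change of variables, Lemmas \ref{le5} and \ref{le9} to split the factor off on the left, Lemma \ref{le8} for the $h$-part, a transposition, Lemma \ref{le6} to re-extract the factor with a unimodular matrix free of the distinguished variable, and Theorems \ref{th1} and \ref{th4} to absorb the factors into the diagonal in the order $h,p_{n-1},\ldots,p_1$. One correction: drop the parenthetical alternative ``Lemma \ref{le6} with $f=0$'' for the left extraction, since that lemma requires $d_{l-1}(F)=1$, whereas here $d_{l-1}(F)=h_1\cdots h_{l-1}$; Lemma \ref{le5} is the right tool. The only other difference from the paper is bookkeeping. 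You rerun the full Theorem \ref{th6} argument at each level of a descending induction, so each use of Theorem \ref{th4} has a unimodular cofactor and its hypotheses follow directly from Lemma \ref{le2}. The paper instead strips the $h$-part once and then peels $p_{n-1},\ldots,p_1$ off a single cofactor, which tacitly needs a Lemma \ref{le9}-type check of $d_k$ and $J_k$ for the intermediate products.
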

\begin{proof}
Let $\gamma = (z_1-f_1(z_2,\ldots,z_n))\cdots (z_{n-1}-f_{n-1}(z_n))$. Assume that the Smith form of $F$ is
\[S = diag\{h_1,h_2,\ldots, \gamma h_l\},\]
where $h_1,h_2,\ldots,h_l\in K[z_n]$ satisfy $h_1\mid h_2\mid \cdots \mid h_l$ and $h_1\cdots h_l=h$.

Necessity: If $F \sim S$, by Lemma \ref{le2}, $J_k(F) =J_k(S) = K[{z_1},{z_2}, \ldots ,{z_n}]$, where $k = 1, \ldots , l$.

Sufficiency:
By using a method similar to that in Theorem \ref{th6}, we need only replace the isomorphism $\theta$ with the one defined by  $\theta(z_1)=z_1-f(z_2,\ldots,z_n),\ldots, \theta(z_{n-1})=z_{n-1}-f_{n-1}(z_n)$ and $\theta(z_n)= z_n$. 
We can prove that $F \sim diag\{h_1,\ldots,h_l\}\cdot G_1$, where $\det(G_1)=\gamma$, $d_{l-1}(G_1) = 1$ and $J_{l-1}(G_1) =K[{z_1},{z_2}, \ldots ,{z_n}]$. By Lemma \ref{le6}, there exists a unimodular matrix $W_1 \in K[{z^c_{n-1}}]^{l\times l}$ such that 
\[
W_1G_1 =  \left( {\begin{array}{*{20}{c}}
{I_{l-1}}&{}\\
{}& z_{n-1}-f_{n-1}(z_n)
\end{array}} \right)\cdot G_2.
\]
Hence, we have the following equivalence relation:
$$F \sim diag\{h_1,h_2,\ldots,h_l\}\cdot W_1^{-1}\cdot \left( {\begin{array}{*{20}{c}}
{I_{l-1}}&{}\\
{}& z_{n-1}-f_{n-1}(z_n)
\end{array}} \right) G_2,
$$
where $G_2 \in K[z_1,z_2, \ldots ,{z_n}]^{l\times l}$. 
Let 
$$A_1 = diag\{h_1,h_2,\ldots,h_l\}\cdot W_1^{-1}\cdot \left( {\begin{array}{*{20}{c}}
{I_{l-1}}&{}\\
{}& z_{n-1}-f_{n-1}(z_n)
\end{array}} \right).$$
Using the same method as in Theorem \ref{th1}, there exist two unimodular matrices $U_2,V_2 \in K[{z_1},{z_2}, \ldots ,{z_n}]^{l\times l}$ such that 
$A_1 = U_2\cdot diag\{h_1,h_2,\ldots, (z_{n-1}-f_{n-1}(z_n))h_l\}
\cdot V_2.$
Therefore, $F \sim diag\{h_1,h_2,\ldots, (z_{n-1}-f_{n-1}(z_n))h_l\}\cdot V_2G_2$. Let $G_3 = V_2G_2$. Then we have $d_{l-1}(G_3)= 1$, $J_{l-1}(G_3)=K[{z_1},{z_2}, \ldots ,{z_n}]$. It is clear that $(z_{n-2}-f_{n-2}(z_{n-1},z_n))\mid \det(G_3)$. By Lemma \ref{le6}, there exists a unimodular matrix $W_2 \in K[{z^c_{n-2}}]^{l\times l}$ such that 
$
W_2G_3 =  diag\{1,\ldots,1,z_{n-2}-f_{n-2}(z_{n-1},z_n)\}\cdot G'_4.$ By Theorem \ref{th4} again, we have 
$$F \sim diag\{h_1,h_2,\ldots, (z_{n-2}-f_{n-2}(z_{n-1},z_n))(z_{n-1}-f_{n-1}(z_n))h_l\} \cdot G_4.$$
where $G_4 \in K[z_1,z_2, \ldots ,{z_n}]^{l\times l}$. By repeating the above procedure $n-3$ times, we obtain 
$$F \sim diag\{h_1,h_2,\ldots, \gamma h_l\}\cdot G,$$
where $G \in K[z_1,z_2, \ldots ,{z_n}]^{l\times l}$. It is straightforward that $G$ is unimodular, hence $F \sim S$.
\end{proof}

The above theorem resolves Problem \ref{pro2}. Subsequently, we extend this result  to non-square and rank-deficient matrices.

\begin{theorem}\label{th2.2}
Let $F \in {K[{z_1},{z_2}, \ldots ,{z_n}]^{l \times m}}(l<m)$ with rank $r (1\le r \le l)$, $d_r(F) = (z_1-f_1(z_2,\ldots,z_n))\cdots (z_{n-1}-f_{n-1}(z_n))h$, where $h\in K[z_n]$. Then $F$ is equivalent to its Smith form if and only if $J_k(F) = K[{z_1},{z_2}, \ldots ,{z_n}]$ for $k = 1, \ldots , r$.
\end{theorem}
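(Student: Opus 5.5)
The plan mirrors the proof of Theorem \ref{th1.2}, with Theorem \ref{th66} used in place of Theorem \ref{th6} for the square block. Necessity is immediate: if $F$ is equivalent to its Smith form, Lemma \ref{le2} gives $J_k(F)=J_k(S)=K[z_1,\ldots,z_n]$ for $k=1,\ldots,r$, since the reduced minors of a Smith form with nonzero diagonal part include $1$ at every order up to $r$.

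For sufficiency, I first invoke the Lin-Bose theorem (Lemma \ref{le00}), using $J_r(F)=K[z_1,\ldots,z_n]$. This gives a factorization $F=GF_1$ with $G\in K[z_1,\ldots,z_n]^{l\times r}$ and $F_1\in K[z_1,\ldots,z_n]^{r\times m}$ ZLP. Lemma \ref{le1} then supplies a unimodular $U_1$ with $F_1U_1=(I_r\ \ 0_{r,m-r})$, so $F\sim (G\ \ 0_{l,m-r})$. By Lemma \ref{le2}, $J_r(G)=K[z_1,\ldots,z_n]$.

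Next I apply Lemma \ref{le00} to $G$ in transposed form. This gives $G=G'F_1'$ with $G'\in K[z_1,\ldots,z_n]^{l\times r}$ ZRP and $F_1'\in K[z_1,\ldots,z_n]^{r\times r}$. Transposing Lemma \ref{le1} yields a unimodular $U_2$ with $U_2G'=\left(\begin{array}{c} I_r\\ 0_{l-r,r}\end{array}\right)$. Hence
$$F\sim \left(\begin{array}{cc} F_1' & 0_{r,m-r}\\ 0_{l-r,r} & 0_{l-r,m-r}\end{array}\right).$$

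Lemma \ref{le2} then gives $\det(F_1')=d_r(F_1')=d_r(F)=(z_1-f_1(z_2,\ldots,z_n))\cdots(z_{n-1}-f_{n-1}(z_n))h$, up to a nonzero constant that can be absorbed. It also gives $J_k(F_1')=J_k(F)=K[z_1,\ldots,z_n]$ for $k=1,\ldots,r$. So $F_1'$ is a square $r\times r$ matrix satisfying the hypotheses of Theorem \ref{th66}, and it is equivalent to its Smith form $D$. Embedding the unimodular transformations block-diagonally with identities shows that $F$ is equivalent to $\left(\begin{array}{cc} D&0\\0&0\end{array}\right)$. This is the Smith form of $F$, because equivalence preserves $d_k$, so the invariant factors agree.

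The main obstacle is the ZRP half of the Lin-Bose step. Lemma \ref{le00} is stated only for wide matrices ($l<m$), so I would apply it to $G^T\in K[z_1,\ldots,z_n]^{r\times l}$ when $r<l$. This needs $J_r(G^T)=J_r(G)$, which holds because transposition preserves minors, and it produces $G^T=F_1'^TG'^T$ with $G'^T$ ZLP. When $r=l$, $G$ is already square and no second step is needed.

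A second, milder issue is matching $d_r(F)$ with $\det(F_1')$ exactly, given that $d$'s are defined only up to units. I would handle this by normalizing constants, as the paper does in Theorem \ref{th1}.
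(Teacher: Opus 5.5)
Your proposal is correct and follows the paper's route. The paper omits this proof, saying it is the same as that of Theorem~\ref{th1.2} via Lemma~\ref{le00}, with Theorem~\ref{th66} replacing Theorem~\ref{th6} for the square $r\times r$ block $F_1'$; your transposed use of Lin--Bose on $G^T$ and the corrected block size $0_{l,m-r}$ just make explicit what the paper's proof of Theorem~\ref{th1.2} leaves implicit.
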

\begin{proof}
According to  Lemma \ref{le00} and Theorem \ref{th66}, the proof is similar to that of Theorem \ref{th1.2} and is therefore omitted.
\end{proof}


\begin{remark}
According to the proofs of Theorems \ref{th4} and \ref{th66}, the product preceding \( h \in K[z_n] \) in Theorem~\ref{th2.2} is not required to contain all \( n-1 \) factors of the form \( z_i - f_i(z_{i+1},\ldots,z_n) \), where $i = 1,\ldots,n-1$. Specifically, it may contain only a subset of such factors—for instance, it could take the form $(z_{n-1} - f_{n-1}(z_n))\,h$, or $(z_1 - f_1(z_2, \ldots, z_n))(z_{n-1} - f_{n-1}(z_n))\,h$. 
\end{remark}

We now illustrate the validity and effectiveness of Theorem \ref{th66} through a specific example.
\begin{example}\label{exa1}
Consider the matrix $F\in \mathbb{Q}[x,y,z]^{3\times 3}$ given by
\[
F=
\begin{pmatrix}
F_{11} & F_{12} & F_{13}\\
F_{21} & F_{22} & F_{23}\\
F_{31} & F_{32} & F_{33}
\end{pmatrix},
\]
where $ \mathbb{Q}$ is the rational number field and
\begin{align*}
F_{11} &= y z^{4}+z^{5}+y^{2} z^{2}+y z^{3}-y z^{2}-z^{3}-y^{2}-y z,\\
F_{12} &= -x y z^{4}-x z^{5}+y z^{5}-x y^{2} z^{2}-x y z^{3}-x z^{4}+y^{2} z^{3}+x z^{3}-y z^{3}\\*
        &\quad +x y^{2}+x y z+x z^{2}-y^{2} z+x y,\\
F_{13} &= y z^{6}-x z^{5}+y^{2} z^{4}-x y z^{3}-y z^{4}+x z^{3}-y^{2} z^{2}+x y z+z^{3}+x z+x-z,\\[4pt]
F_{21} &= -x y z^{4}-x z^{5}-x y^{2} z^{2}-x y z^{3}+x y z^{2}+x z^{3}+x y^{2}+x y z-z-1,\\
F_{22} &= x^{2} y z^{4}+x^{2} z^{5}-x y z^{5}+x^{2} y^{2} z^{2}+x^{2} y z^{3}+x^{2} z^{4}-x y^{2} z^{3}-x^{2} z^{3}\\*
        &\quad +x y z^{3}-x^{2} y^{2}-x^{2} y z-x^{2} z^{2}+x y^{2} z-x^{2} y+x z+x,\\
F_{23} &= -x y z^{6}+x^{2} z^{5}-x y^{2} z^{4}+x^{2} y z^{3}+x y z^{4}-x^{2} z^{3}+x y^{2} z^{2}-x^{2} y z\\*
        &\quad -x z^{3}-x^{2} z-x^{2}-2 y z-x-2 y,\\[4pt]
F_{31} &= -y^{2} z^{4}-2 y z^{5}-z^{6}-y^{3} z^{2}-2 y^{2} z^{3}-y z^{4}+y^{2} z^{2}+2 y z^{3}+z^{4}\\*
        &\quad +y^{3}+2 y^{2} z+y z^{2}-z-1,\\
F_{32} &= x y^{2} z^{4}+2 x y z^{5}+x z^{6}-y^{2} z^{5}-y z^{6}+x y^{3} z^{2}+2 x y^{2} z^{3}+2 x y z^{4}\\*
        &\quad +x z^{5}-y^{3} z^{3}-y^{2} z^{4}-x y z^{3}-x z^{4}+y^{2} z^{3}+y z^{4}-x y^{3}-2 x y^{2} z\\*
        &\quad -2 x y z^{2}-x z^{3}+y^{3} z+y^{2} z^{2}-x y^{2}-x y z+x z+x,\\
F_{33} &= -y^{2} z^{6}-y z^{7}+x y z^{5}+x z^{6}-y^{3} z^{4}-y^{2} z^{5}+x y^{2} z^{3}+x y z^{4}+y^{2} z^{4}\\*
        &\quad +y z^{5}-x y z^{3}-x z^{4}+y^{3} z^{2}+y^{2} z^{3}-x y^{2} z-x y z^{2}-y z^{3}-z^{4}\\*
        &\quad -x y z-x z^{2}-x y-2 x z-y z+z^{2}-x-2 y-1.
\end{align*}

By calculation, we have $d_1(F)=1$, $d_2(F) = z+1$, and $\det(F) = (x-yz)(y+z^2)(z^3 + z^2 - z - 1)$. Let $h = z^3 + z^2 - z - 1$; we can decompose $h$ as $(z+1)^2(z-1)$. Then the Smith form of $F$ is
$$
S = \left( {\begin{array}{*{20}{c}}
 1 & 0 & 0 \\
0 & z+1 & 0\\
 0 & 0 & (x-yz)(y+z^2)(z^2-1)
\end{array}} \right).
$$

Using {\sc Maple}, we can compute all the first- and second-order reduced minors of $F$ and find that the reduced Gr\"{o}bner basis of the ideal generated by these minors is $\{1\}$. This implies that $J_1(F) = J_2(F) = \mathbb{Q}[x,y,z]$. In addition, it is obvious that $J_3(F) = \mathbb{Q}[x,y,z]$.
Therefore, we can apply Theorem \ref{th66} to reduce $F$ to its equivalent Smith form.

Since $(z+1) \mid d_2(F)$ and $J_1(F)= \mathbb{Q}[x,y,z]$, by Lemma \ref{le5}, there exists a unimodular matrix $U_1=\left( {\begin{array}{*{20}{c}}
y +z -x  & -1 & 1 \\
 x  & 1 & 0 \\
 -1 & 0 & 0 
\end{array}} \right)$ such that $U_1F = \left( {\begin{array}{*{20}{c}}
 1 & 0 & 0 \\
0 & z+1 & 0\\
 0 & 0 & z+1
\end{array}} \right)G_1$. By Lemma \ref{le4}, we have $d_2(G_1) = 1$, $J_2(G_1)= \mathbb{Q}[x,y,z]$. Since $(z-1)\mid d_3(G_1)$. By Lemma \ref{le5} again, there exists a unimodular matrix $U_2=\left( {\begin{array}{*{20}{c}}
1 & 0  & 0  \\
 -y & 1 & 0 \\
-x & 0 & 1 
\end{array}} \right)$ such that $U_2G_1 = \left( {\begin{array}{*{20}{c}}
 1 & 0 & 0 \\
0 & 1 & 0\\
 0 & 0 & z-1
\end{array}} \right)G_2$, where 
$$
G_2 = \left[\begin{array}{ccc}
0 & 0 & -1 \\
 -1 & x  & -x -y  \\
 -y z^{2}-z^{3}-y^{2}-y z  & G_2[3,2]  & G_2[3,3]
\end{array}\right],
$$
where $G_2[3,2]=x yz^{2}+xz^{3}-yz^{3}+x y^{2}+x y z +x z^{2}-y^{2} z +x y$, $G_2[3,3]=-y z^{4}+x z^{3}-y^{2} z^{2}+x y z -z $. 

Let 
$$A  = \left( {\begin{array}{*{20}{c}}
 1 & 0 & 0 \\
0 & z+1 & 0\\
 0 & 0 & z+1
\end{array}} \right)U^{-1}_2\left( {\begin{array}{*{20}{c}}
 1 & 0 & 0 \\
0 & 1 & 0\\
 0 & 0 & z-1
\end{array}} \right).$$
It is easy to verify that there exists a unimodular matrix $U'= \left( {\begin{array}{*{20}{c}}
 1 & 0 & 0 \\
-yz-1 & 1 & 0\\
 -xz-1 & 0 & 1
\end{array}} \right)$ such that 
$$
U'A = \left( {\begin{array}{*{20}{c}}
 1 & 0 & 0 \\
0 & z+1 & 0\\
 0 & 0 & (z+1)(z-1)
\end{array}} \right).
$$
Thus, we have
$$
F = U^{-1}_1U'^{-1}\left( {\begin{array}{*{20}{c}}
 1 & 0 & 0 \\
0 & z+1 & 0\\
 0 & 0 & (z+1)(z-1)
\end{array}} \right)G_2.
$$
Since $d_2(G_2) = 1$, $J_2(G_2)= \mathbb{Q}[x,y,z]$ and $(y+z^2)\mid d_3(G_2)$. By Lemma \ref{le6}, there exists a unimodular matrix $U_3=\left( {\begin{array}{*{20}{c}}
1 & 0  & 0  \\
 -x & 1 & 0 \\
 -z  & 0  & 1 
\end{array}} \right)$ such that $U_3G_2 = \left( {\begin{array}{*{20}{c}}
 1 & 0 & 0 \\
0 & 1 & 0\\
 0 & 0 & y+z^2
\end{array}} \right)G_3$, where 
$$
G_3 = \left( {\begin{array}{*{20}{c}}
0 & 0 & -1 \\
 -1 & x  & -y  \\
 -y -z  & \left(y +z \right) x -y z +x  & \left(-y z +x \right) z 
\end{array}} \right).
$$
At this point, we have $d_2(G_3) = 1$, $J_2(G_3)= \mathbb{Q}[x,y,z]$ and $(x-yz)\mid d_3(G_3)$. By Lemma \ref{le6} again, there exists a unimodular matrix $U_4=\left( {\begin{array}{*{20}{c}}
1 & 0 & 0 
\\
 -y  & 1 & 0 
\\
 y \left(y +z \right) & -y -z  & 1 
\end{array}} \right)$ such that $U_4G_3 = \left( {\begin{array}{*{20}{c}}
 1 & 0 & 0 \\
0 & 1 & 0\\
 0 & 0 & x-yz
\end{array}} \right)G_4$, where 
$
G_4 = \left( {\begin{array}{*{20}{c}}
0 & 0 & -1 
\\
 -1 & x  & 0 
\\
 0 & 1 & z  
\end{array}} \right).
$
Combining the above equalities, $F$ can be reexpressed as:
$$
F = U^{-1}_1U'^{-1}\left( {\begin{array}{*{20}{c}}
 1 & 0 & 0 \\
0 & z+1 & 0\\
 0 & 0 & z^2-1
\end{array}} \right)U^{-1}_3\left( {\begin{array}{*{20}{c}}
 1 & 0 & 0 \\
0 & 1 & 0\\
 0 & 0 & y+z^2
\end{array}} \right)U^{-1}_4\left( {\begin{array}{*{20}{c}}
 1 & 0 & 0 \\
0 & 1 & 0\\
 0 & 0 & x-yz
\end{array}} \right)G_4.
$$

By Theorem \ref{th1}, there exists a unimodular matrix $P=\left( {\begin{array}{*{20}{c}}
 1 & 0 & 0 \\
xz+1 & 1 & 0\\
 z^3-z & 0 & 1
\end{array}} \right)$ such that
$$
\left( {\begin{array}{*{20}{c}}
 1 & 0 & 0 \\
0 & z+1 & 0\\
 0 & 0 & z^2-1
\end{array}} \right)U^{-1}_3\left( {\begin{array}{*{20}{c}}
 1 & 0 & 0 \\
0 & 1 & 0\\
 0 & 0 & y+z^2
\end{array}} \right) = P\left( {\begin{array}{*{20}{c}}
 1 & 0 & 0 \\
0 & z+1 & 0\\
 0 & 0 & (z^2-1)(y+z^2)
\end{array}} \right).
$$
By Theorem \ref{th4}, we further obtain that there exists a unimodular matrix  
$$Q = \left( {\begin{array}{*{20}{c}}
 1 & 0 & 0 \\
yz+1 & z+1 & 0\\
0 & (y+z)(z^2-1)(y+z^2) & (y+z)(z^2-1)(y+z^2)(x-yz)
\end{array}} \right)$$ 
such that
$$
\left( {\begin{array}{*{20}{c}}
 1 & 0 & 0 \\
0 & z+1 & 0\\
 0 & 0 & (z^2-1)(y+z^2)
\end{array}} \right)U^{-1}_4\left( {\begin{array}{*{20}{c}}
 1 & 0 & 0 \\
0 & 1 & 0\\
 0 & 0 & x-yz
\end{array}} \right) = Q\left( {\begin{array}{*{20}{c}}
 1 & 0 & 0 \\
0 & z+1 & 0\\
 0 & 0 & (z^2-1)(y+z^2)(x-yz)
\end{array}} \right).
$$

Therefore, 
$$
F = U^{-1}_1U'^{-1}PQ\left( {\begin{array}{*{20}{c}}
 1 & 0 & 0 \\
0 & z+1 & 0\\
 0 & 0 & (z^2-1)(y+z^2)(x-yz)
\end{array}} \right)G_4.
$$
Since $U^{-1}_1U'^{-1}PQ$ and $G_4$ are unimodular, it follows that $F$ is equivalent to $S$.
\end{example}

\section{The equivalence of matrices via algebra isomorphisms}
In this section, we employ algebra isomorphisms to deal with the equivalence problem of a new class of polynomial matrices, which is precisely Problem \ref{pro3}. Let $F \in {K[z_1,z_2,\ldots,z_n]^{l \times l}}$  with $\det(F)=g_1g_2\cdots g_{n-1}h$, where $h\in K[z_n]$ and for $i=1,\cdots,n-1$, $g_i = \sum_{j=1}^{n-1} a_{ij}z_j+b_i$ with $a_{ij},b_i\in K$. Moreover, the $n-1$ linear polynomials $g_1, \ldots, g_{n-1}$ are linearly independent over $K$.  By constructing an automorphism of $K[z_1,z_2,\ldots,z_n]$, we prove that $F$ is equivalent to its Smith form. 

\begin{definition}\label{def41}
Let \( g_i \in K[z_1,\ldots,z_{n-1}] \) be $n-1$ linear polynomials of the form  
$
g_i = a_{i1}z_1 + a_{i2}z_2 + \ldots + a_{i,n-1}z_{n-1} + b_i,
$  
where \( a_{ij}, b_i \in K \) for \( i = 1,\ldots,n-1 \). Define an endomorphism $\phi$ as follows:
\[
\begin{aligned}
\phi : K[z_1,z_2,\ldots,z_n&]\to K[z_1,z_2,\ldots,z_n] \\
z_1 &\mapsto g_1, \\
z_2 &\mapsto g_2, \\
&\cdots \\
z_{n-1}&\mapsto g_{n-1}, \\
z_n &\mapsto z_n.
\end{aligned}
\]
\end{definition}

\begin{lemma}\label{le88}
Let $\phi, g_1, \ldots, g_{n-1}$ be given as in Definition \ref{def41}. If $g_1, \ldots, g_{n-1}$ are linearly independent over $K$. Then $\phi$ is an automorphism.
\end{lemma}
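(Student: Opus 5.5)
Since $\phi$ is a $K$-algebra endomorphism of $K[z_1,\ldots,z_n]$, it is determined by the images of the variables. So it suffices to exhibit a $K$-algebra endomorphism $\psi$ with $\psi\circ\phi=\phi\circ\psi=\mathrm{id}$; checking both compositions on each $z_i$ then gives the identity everywhere.

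\textbf{Step 1: the affine system.} Write $g=(g_1,\ldots,g_{n-1})^T = A z' + b$, where $A=(a_{ij})\in K^{(n-1)\times(n-1)}$, $z'=(z_1,\ldots,z_{n-1})^T$ and $b=(b_1,\ldots,b_{n-1})^T$. The hypothesis that $g_1,\ldots,g_{n-1}$ are linearly independent over $K$ should be read as saying that their homogeneous linear parts, the rows of $A$, are independent, so $\det A\neq 0$. If one instead interprets independence for the affine polynomials $g_i$ themselves, the interpretation that makes the statement true is exactly that $A$ is invertible. For example, $g_1=z_1$ and $g_2=z_1+1$ are independent as polynomials, but then $\phi$ is not surjective. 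I will therefore make this reading explicit at the start of the proof.

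\textbf{Step 2: construct $\psi$.} Let $C=A^{-1}=(c_{ij})$ and define $\psi$ by
\[
\psi(z_i)=\sum_{j=1}^{n-1}c_{ij}(z_j-b_j)\quad (i=1,\ldots,n-1),\qquad \psi(z_n)=z_n .
\]
In vector form, $\psi(z')=C(z'-b)$.

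\textbf{Step 3: verify the two compositions.} Since both maps are $K$-linear on constants and fix $z_n$, we get
\[
(\phi\circ\psi)(z') = C\bigl(\phi(z')-b\bigr)=C(Az'+b-b)=z',
\]
\[
(\psi\circ\phi)(z') = A\,\psi(z')+b = AC(z'-b)+b = z'.
\]
In both cases $z_n$ is fixed. Hence $\phi\circ\psi=\psi\circ\phi=\mathrm{id}$, so $\phi$ is bijective and therefore an automorphism.

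The only real obstacle is Step 1: pinning down that the independence hypothesis yields $\det A\ne0$. The rest is direct computation.
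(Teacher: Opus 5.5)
Your proof is correct and is essentially the paper's argument: the paper encodes the affine substitution by an $(n+1)\times(n+1)$ matrix acting on $(z_1,\ldots,z_n,1)^T$ and defines $\psi$ through that matrix's inverse, which is just the homogenized form of your $\psi(z')=C(z'-b)$, and then checks both compositions on the generators exactly as you do. Your caveat in Step 1 is well taken: the paper's augmented matrix has determinant $\det(a_{ij})$, so its step ``since $g_1,\ldots,g_{n-1}$ are linearly independent, $A$ is invertible'' holds only under your reading (independence of the homogeneous linear parts), and your example $g_1=z_1$, $g_2=z_1+1$ shows that the literal reading does not suffice.
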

\begin{proof}
Let
$$
A = \left( {\begin{array}{*{20}{c}}
{a_{11}}&{a_{12}}&{\cdots}&{a_{1,n-1}}&0&b_1\\
{a_{21}}&{a_{22}}&{\cdots}&{a_{2,n-1}}&0&b_2\\
{\vdots}&{\vdots}& \ddots &{\vdots}&\vdots&\vdots\\
{a_{n-1,1}}&{a_{n-1,2}}&{\cdots}&{a_{n-1,n-1}}&0&b_{n-1}\\
{0}&{0}&{\cdots}&{0}&1&0\\
{0}&{0}&{\cdots}&{0}&0&1
\end{array}} \right) \in K^{(n+1)\times (n+1)}.
$$
Since $g_1, \ldots, g_{n-1}$ are linearly independent, $A$ is invertible. In addition, we have 
$$
\phi \left( {\begin{array}{*{20}{c}}
{z_1}\\
{z_2}\\
{\vdots}\\
{z_{n-1}}\\
{z_n}\\
{1}
\end{array}} \right) = \left( {\begin{array}{*{20}{c}}
\phi({z_1})\\
\phi({z_2})\\
{\vdots}\\
\phi({z_{n-1}})\\
\phi({z_n})\\
\phi({1})
\end{array}} \right) = A \left( {\begin{array}{*{20}{c}}
{z_1}\\
{z_2}\\
{\vdots}\\
{z_{n-1}}\\
{z_n}\\
{1}
\end{array}} \right).
$$
Consider another endomorphism $\psi$ of $K[z_1,z_2,\ldots,z_n]$, defined as follows:
$$
\psi \left( {\begin{array}{*{20}{c}}
{z_1}\\
{z_2}\\
{\vdots}\\
{z_{n-1}}\\
{z_n}\\
{1}
\end{array}} \right) = A^{-1}\left( {\begin{array}{*{20}{c}}
{z_1}\\
{z_2}\\
{\vdots}\\
{z_{n-1}}\\
{z_n}\\
{1}
\end{array}} \right).
$$
Then
$$
\psi\cdot \phi\left( {\begin{array}{*{20}{c}}
{z_1}\\
{z_2}\\
{\vdots}\\
{z_{n-1}}\\
{z_n}\\
{1}
\end{array}} \right) = \psi\left( {\begin{array}{*{20}{c}}
A \left( {\begin{array}{*{20}{c}}
{z_1}\\
{z_2}\\
{\vdots}\\
{z_{n-1}}\\
{z_n}\\
{1}
\end{array}} \right)
\end{array}} \right)= A\cdot \psi \left( {\begin{array}{*{20}{c}}
{z_1}\\
{z_2}\\
{\vdots}\\
{z_{n-1}}\\
{z_n}\\
{1}
\end{array}} \right)=A\cdot A^{-1}\left( {\begin{array}{*{20}{c}}
{z_1}\\
{z_2}\\
{\vdots}\\
{z_{n-1}}\\
{z_n}\\
{1}
\end{array}} \right)  = \left( {\begin{array}{*{20}{c}}
{z_1}\\
{z_2}\\
{\vdots}\\
{z_{n-1}}\\
{z_n}\\
{1}
\end{array}} \right).
$$
On the other hand,
$$
\phi\cdot \psi\left( {\begin{array}{*{20}{c}}
{z_1}\\
{z_2}\\
{\vdots}\\
{z_{n-1}}\\
{z_n}\\
{1}
\end{array}} \right) = \phi\left( {\begin{array}{*{20}{c}}
A^{-1} \left( {\begin{array}{*{20}{c}}
{z_1}\\
{z_2}\\
{\vdots}\\
{z_{n-1}}\\
{z_n}\\
{1}
\end{array}} \right)
\end{array}} \right)= A^{-1}\cdot \phi \left( {\begin{array}{*{20}{c}}
{z_1}\\
{z_2}\\
{\vdots}\\
{z_{n-1}}\\
{z_n}\\
{1}
\end{array}} \right)=A^{-1}\cdot A\left( {\begin{array}{*{20}{c}}
{z_1}\\
{z_2}\\
{\vdots}\\
{z_{n-1}}\\
{z_n}\\
{1}
\end{array}} \right)  = \left( {\begin{array}{*{20}{c}}
{z_1}\\
{z_2}\\
{\vdots}\\
{z_{n-1}}\\
{z_n}\\
{1}
\end{array}} \right).
$$
It follows that $\phi\cdot \psi = \psi\cdot \phi = {\bf 1}$, where ${\bf 1}$ is the unit of $K[z_1,z_2,\ldots,z_n]$. Therefore, $\phi$ is an automorphism.
\end{proof}

\begin{lemma}\label{le888}
Let $F \in {K[{z_1},{z_2}, \ldots ,{z_n}]^{l \times l}}$ with rank $r (1\le r \le l)$, $\phi$ be given as in Definition \ref{def41}. If $J_k(F) = K[{z_1},{z_2}, \ldots ,{z_n}]$ for $k=1,\ldots,r$, then $J_k(\phi(F))= K[{z_1},{z_2}, \ldots ,{z_n}]$ and $d_k(\phi(F))=\phi(d_k(F))$ for $k=1,\ldots,r$.
\end{lemma}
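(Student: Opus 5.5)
Since the statement does not assume that $g_1,\ldots,g_{n-1}$ are linearly independent, I first note that Lemma \ref{le888} will only be used after Lemma \ref{le88}. I take $\phi$ to be a $K$-algebra automorphism, as in the setting of this section. Under that assumption, the argument rests on two facts. First, $\phi$ commutes with determinants: it is a ring homomorphism applied entrywise, so $\phi(\det M)=\det\phi(M)$ for every square $M$. Consequently the $k\times k$ minors of $\phi(F)$ are exactly the images $\phi(u_1),\ldots,\phi(u_\beta)$ of the $k\times k$ minors $u_1,\ldots,u_\beta$ of $F$. Second, $\phi$ preserves both divisibility and ideals. It maps units to units and irreducibles to irreducibles, and for any ideal $I$ we have $\phi(I)$ equal to the ideal generated by the images of the generators of $I$, with $\phi$ of the whole ring being the whole ring.

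For each $k\le r$, I will write $u_j=d_k(F)v_j$, so that $J_k(F)=\langle v_1,\ldots,v_\beta\rangle$. Applying $\phi$ gives $\phi(u_j)=\phi(d_k(F))\,\phi(v_j)$. The next step is to show that $\phi(d_k(F))$ is a g.c.d. of the $\phi(u_j)$. It is a common divisor by the displayed identity. Now let $c$ be any common divisor of the $\phi(u_j)$. Applying $\phi^{-1}$ shows that $\phi^{-1}(c)$ divides every $u_j$, hence divides $d_k(F)$, and so $c$ divides $\phi(d_k(F))$. Therefore $d_k(\phi(F))=\phi(d_k(F))$, up to a nonzero constant, which we normalise away as is done throughout the paper. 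Because $\phi$ is injective and $d_k(F)\neq0$, the image $\phi(d_k(F))$ is nonzero, so $\phi(F)$ also has rank $r$ and the quantities $J_k(\phi(F))$ are meaningful.

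It then follows that the $k\times k$ reduced minors of $\phi(F)$ are precisely $\phi(v_1),\ldots,\phi(v_\beta)$. Hence
$$J_k(\phi(F))=\langle \phi(v_1),\ldots,\phi(v_\beta)\rangle=\phi(J_k(F))=\phi(K[z_1,\ldots,z_n])=K[z_1,\ldots,z_n].$$
More concretely, from an expression $1=\sum_j p_jv_j$ we obtain $1=\sum_j\phi(p_j)\phi(v_j)$.

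The only delicate step is the g.c.d. transfer, and it is exactly where surjectivity of $\phi$, that is, the existence of the inverse $\psi$ from Lemma \ref{le88}, is needed. A mere endomorphism could create new common factors: if $\phi$ were not invertible, $\phi(v_j)$ might acquire a common factor. For this reason I will state explicitly that the linear independence hypothesis of Lemma \ref{le88} is in force.
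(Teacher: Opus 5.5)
Your proof is correct and follows essentially the same route as the paper: apply $\phi$ to $u_j=d_k(F)v_j$, identify the $\phi(v_j)$ as the reduced minors of $\phi(F)$, and transport the B\'ezout identity $\sum_j p_jv_j=1$. The one difference is the g.c.d.\ step, which you justify through $\phi^{-1}$, whereas the paper gets coprimality of the $\phi(v_j)$ directly from $\sum_j\phi(b_j)\phi(v_j)=1$. That identity also shows your closing remark is off: under the hypothesis $J_k(F)=\langle 1\rangle$ no ring homomorphism can give the $\phi(v_j)$ a common factor, so what invertibility (really injectivity) is needed for is $\phi(d_k(F))\neq 0$, i.e.\ preservation of the rank, which you rightly check and the paper leaves implicit.
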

\begin{proof}
Let $u_1,\ldots, u_\beta$ be all the $k_0 \times k_0$ minors of $F$, and $v_1,\ldots, v_\beta$ be the corresponding reduced minors, where $1\le k_0 \le r$. Then $u_i = d_{k_0}(F)v_i$ for $i=1,\ldots,\beta$. Since $\phi$ is an automorphism, the images $\phi(u_1),\ldots,\phi(u_\beta)$ are the $k_0 \times k_0$ minors of $\phi(F)$ and $\phi(u_i) = \phi(d_{k_0}(F))\phi(v_i)$ for $i=1,\cdots,\beta$. Furthermore, the fact that $J_{k_0}(F)= K[{z_1},{z_2}, \ldots ,{z_n}]$ implied that there exist $b_1,\ldots,b_\beta \in K[{z_1},{z_2}, \ldots ,{z_n}]$ such that 
$$
b_1v_1+ b_2v_2+\ldots + b_\beta v_\beta = 1.
$$
Applying $\phi$ to the equation above yields
$$
\phi(b_1)\phi(v_1)+ \phi(b_2)\phi(v_2)+\ldots + \phi(b_\beta) \phi(v_\beta) = 1.
$$
Therefore, we have $gcd(\phi(v_1),\phi(v_2),\ldots,\phi(v_\beta))=1$. Combined with $\phi(u_i) = \phi(d_{k_0}(F)) \phi(v_i)$, we further have
$\phi(v_1),\phi(v_2),\ldots,\phi(v_\beta)$ are the $k_0 \times k_0$ reduced minors of $\phi(F)$. 
It follows that $J_{k_0}(\phi(F))= K[{z_1},{z_2}, \ldots ,{z_n}]$ and $d_{k_0}(\phi(F))=\phi(d_{k_0}(F))$, where $1\le k_0 \le r$.
\end{proof}


Let $g_1, \ldots, g_{n-1}\in K[z_1,z_2,\ldots,z_{n-1}]$ be $n-1$ linear polynomials defined by 
$$g_i = \sum_{j=1}^{n-1} a_{ij} z_j + b_i,$$
where $a_{ij}, b_i \in K$, $i=1,\ldots,n-1$. The main results of this section are as follows.

\begin{theorem}\label{th77}
Let $F \in {K[z_1,z_2,\ldots,z_n]^{l \times l}}$ and $\det(F)=g_1g_2\cdots g_{n-1}h$, where $h\in K[z_n]$ and $g_1,g_2,\ldots, g_{n-1}$ are linearly independent. Then $F$ is equivalent to its Smith form if and only if $J_k(F) = K[{z_1},{z_2}, \ldots ,{z_n}]$ for $k=1,\ldots,l$.
\end{theorem}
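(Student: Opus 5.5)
Necessity is immediate from Lemma \ref{le2}: if $F\sim S$ then $J_k(F)=J_k(S)$, and every $J_k$ of a diagonal Smith form is the unit ideal. For sufficiency, we transport the problem by an automorphism so that the determinant becomes $z_1z_2\cdots z_{n-1}h$. This is the special case of Theorem \ref{th66} in which every $f_i=0$.

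Since $g_1,\ldots,g_{n-1}$ are linearly independent, Lemma \ref{le88} shows that $\phi$ is an automorphism. Let $\psi=\phi^{-1}$; it is again affine-linear in $z_1,\ldots,z_{n-1}$ and fixes $z_n$. Put $\widetilde F=\psi(F)$, applying $\psi$ entrywise. Because minors commute with ring homomorphisms,
\[
\det(\widetilde F)=\psi(g_1)\cdots\psi(g_{n-1})\psi(h)=z_1z_2\cdots z_{n-1}h,
\]
where $\psi(h)=h$ since $h\in K[z_n]$ and $\psi(g_i)=\psi(\phi(z_i))=z_i$. Lemma \ref{le888}, applied with $\psi$ in place of $\phi$, gives $J_k(\widetilde F)=K[z_1,\ldots,z_n]$ and $d_k(\widetilde F)=\psi(d_k(F))$ for all $k$. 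Lemma \ref{le888} is stated for $\phi$ of the form in Definition \ref{def41}. Its proof uses only that the map is an automorphism, and in any case $\psi$ has that form with $z_i\mapsto$ the linear polynomials given by the rows of $A^{-1}$; these are linearly independent.

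Now Theorem \ref{th66} applies to $\widetilde F$ with $f_1=\cdots=f_{n-1}=0$. We obtain unimodular $U,V$ with $\widetilde F=U\widetilde S V$, where $\widetilde S=\mathrm{diag}\{\Phi_1(\widetilde F),\ldots,\Phi_l(\widetilde F)\}$. Applying $\phi$ gives $F=\phi(U)\phi(\widetilde S)\phi(V)$. Here $\phi(U)$ and $\phi(V)$ are unimodular, since $\det\phi(U)=\phi(\det U)$ is a nonzero constant. Moreover $\phi(\widetilde S)$ is diagonal with entries $\phi(d_k(\widetilde F))/\phi(d_{k-1}(\widetilde F))=d_k(F)/d_{k-1}(F)$, using Lemma \ref{le888} once more together with $\phi\psi=\mathbf 1$. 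Thus $\phi(\widetilde S)$ is exactly the Smith form of $F$, up to unit scalars that can be absorbed into $\phi(U)$.

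The main point needing care is bookkeeping. The gcd's $d_k$ are defined only up to units, so we must check that the equality $d_k(\phi(\cdot))=\phi(d_k(\cdot))$ from Lemma \ref{le888} really turns the Smith form of $\widetilde F$ into that of $F$; the divisibility chain is preserved because $\phi$ is a ring automorphism. We also need that the structural hypothesis of Theorem \ref{th66} holds after transport. Here the linear independence is essential: it makes each $\psi(g_i)$ the bare variable $z_i$, and it keeps $h$ in $K[z_n]$.
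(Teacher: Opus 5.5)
Your proposal is correct and follows the paper's proof essentially step for step: pull $F$ back by $\phi^{-1}$ so that the determinant becomes $z_1\cdots z_{n-1}h$, invoke Lemma \ref{le888} and Theorem \ref{th66} (with all $f_i=0$), and push the resulting factorization forward by $\phi$, checking that $\phi$ carries the Smith form of $\phi^{-1}(F)$ to that of $F$. The paper applies Lemma \ref{le888} to $\phi^{-1}$ without comment; your remark that $\phi^{-1}$ is itself of the form in Definition \ref{def41}, or that the lemma only needs an automorphism, supplies the missing justification.
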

\begin{proof}
Necessity: Assume that $S$ is the Smith form of $F$. It is straightforward to check that $J_k(S) = K[{z_1},{z_2}, \ldots ,{z_n}]$ for $k=1,\ldots,l$. If $F$ is equivalent to $S$, by Lemma \ref{le2}, we have that $J_k(F) = J_k(S) =K[{z_1},{z_2}, \ldots ,{z_n}]$ for $k=1,\ldots,l$.

Sufficiency: Let $\phi$ be given as in Definition \ref{def41}. 
By Lemma \ref{le88}, $\phi$ is an automorphism. Thus, $\phi^{-1}$ is also an automorphism of $K[{z_1},{z_2}, \ldots ,{z_n}]$ and 
$$\phi^{-1}(g_1,g_2,\ldots,g_{n-1},z_n) = (z_1,z_2,\ldots,z_{n-1},z_n).$$
Write \(F = (f_{ij})_{l \times l}\) and set \(\phi^{-1}(F) = (\phi^{-1}(f_{ij}))_{l \times l}\). Since \(J_k(F) = K[{z_1},{z_2}, \ldots ,{z_n}]\), by Lemma \ref{le888}, we have $d_k(\phi^{-1}(F))=\phi^{-1}(d_k(F))$ and \(J_k(\phi^{-1}(F)) = K[{z_1},{z_2}, \ldots ,{z_n}]\) for \(k = 1, 2, \ldots, l\). Thus, it follows from \(\det(F) = g_1g_2\cdots g_{n-1}h\) that 
$$\det(\phi^{-1}(F)) = \phi^{-1}(\det(F)) = z_1z_2\cdots z_{n-1}\phi^{-1}(h).$$
Note that $h\in K[z_n]$ and the variable $z_n$ is unchanged by $\phi^{-1}$. Hence $\phi^{-1}(h) = h$. By Theorem \ref{th66}, $\phi^{-1}(F)$ is equivalent to its Smith form; i.e., there exist two unimodular matrices \(U, V \in {K[{z_1},{z_2}, \ldots ,{z_n}]^{l \times l}}\) such that \(U\cdot \phi^{-1}(F)\cdot V = S\), where \(S\) is the Smith form of \(\phi^{-1}(F)\). Let $S = diag\{\Phi_1,\Phi_2,\ldots,\Phi_l\}$, where $\Phi_1,\Phi_2,\ldots,\Phi_l \in K[{z_1},{z_2}, \ldots ,{z_n}]$ and satisfies the following relation:
\begin{equation}\label{eq41}
\Phi_k = \frac{d_k(\phi^{-1}(F))}{d_{k-1}(\phi^{-1}(F))},\ k = 1, 2, \ldots, l.
\end{equation}
We claim that \(\phi(S)\) is the Smith form of \(F\). From equation (\ref{eq41}), we have $d_k(\phi^{-1}(F)) = \Phi_k\cdot d_{k-1}(\phi^{-1}(F))$. Hence, 
\begin{equation}\label{eq42}
\phi^{-1}(d_k(F)) = \Phi_k\cdot \phi^{-1}(d_{k-1}(F)).\end{equation}
By multiplying both sides of equation (\ref{eq42}) by \(\phi\), we obtain
\begin{equation}\label{eq43}
\phi(\Phi_k) = \frac{d_k((F))}{d_{k-1}((F))},\ k = 1, 2, \ldots, l.
\nonumber
\end{equation}
Therefore, $\phi(S) = diag\{\phi(\Phi_1),\phi(\Phi_2),\ldots,\phi(\Phi_l)\}$ is the Smith form of \(F\). From \(U\cdot \phi^{-1}(F)\cdot V = S\), we have that
\(\phi(U) \cdot F \cdot \phi(V) = \phi(S)\), by Lemma \ref{le888}, $\phi(U), \phi(V)\in K[{z_1},{z_2}, \ldots ,{z_n}]^{l \times l}$ are unimodular matrices. Therefore, we conclude that $F$ is equivalent to its Smith form.
\end{proof}

\begin{theorem}\label{th2.23}
Let $F \in {K[{z_1},{z_2}, \ldots ,{z_n}]^{l \times m}}(l<m)$ with rank $r (1\le r \le l)$ and $d_r(F) = g_1g_2\cdots g_{n-1}h$, where $h\in K[z_n]$ and $g_1,g_2,\ldots, g_{n-1}$ are linearly independent. Then $F$ is equivalent to its Smith form if and only if $J_k(F) = K[{z_1},{z_2}, \ldots ,{z_n}]$ for $k = 1, \ldots , r$.
\end{theorem}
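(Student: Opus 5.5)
Necessity is immediate from Lemma \ref{le2}: if $F$ is equivalent to its Smith form $S$, then $J_k(F)=J_k(S)=K[z_1,z_2,\ldots,z_n]$ for $k=1,\ldots,r$, since every reduced minor ideal of a Smith form is the unit ideal. For sufficiency I will follow the reduction used in the proof of Theorem \ref{th1.2}, which brings a non-square, rank-deficient matrix down to a square nonsingular $r\times r$ block. Theorem \ref{th77} then replaces Theorem \ref{th6} as the square-case input.

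First, since $J_r(F)=K[z_1,\ldots,z_n]$, Lemma \ref{le00} gives a factorization $F=GF_1$ with $G\in K[z_1,\ldots,z_n]^{l\times r}$ and $F_1\in K[z_1,\ldots,z_n]^{r\times m}$ ZLP. By Lemma \ref{le1} there is a unimodular $U_1$ with $F_1U_1=(I_r\ 0_{r,m-r})$, so $FU_1=(G\ 0_{l,m-r})$. Lemma \ref{le2} then gives $J_r(G)=K[z_1,\ldots,z_n]$.

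Next I apply Lemma \ref{le00} to $G$, in its transposed form (i.e. to $G^T$ and then transpose back). This yields $G=G'F_1'$ with $G'$ ZRP and $F_1'\in K[z_1,\ldots,z_n]^{r\times r}$ satisfying $d_r(F_1')=d_r(G)$. Lemma \ref{le1} provides a unimodular $U_2$ with $U_2G'=(I_r\ 0)^T$. Consequently
\[F\sim \left(\begin{array}{cc} F_1' & 0\\ 0 & 0\end{array}\right).\]
By Lemma \ref{le2}, $\det(F_1')=d_r(F_1')=d_r(F)=g_1\cdots g_{n-1}h$ and $J_k(F_1')=J_k(F)=K[z_1,\ldots,z_n]$ for $k=1,\ldots,r$. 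Theorem \ref{th77} now applies to $F_1'$, so $F_1'\sim D$ with $D$ its Smith form. Padding with zero blocks gives $F\sim \mathrm{diag}(D,0)$, and this is the Smith form of $F$ because the $d_k$ are invariant under equivalence.

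The only delicate point is the transposed use of the Lin-Bose theorem, which requires applying it to $G^T$ of size $r\times l$. When $r<l$ this is exactly the hypothesis of Lemma \ref{le00}; when $r=l$, the matrix $G$ is already square with $\det G=d_r(F)$ (up to a unit), so that step can be skipped.

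Alternatively, one can avoid repeating the reduction altogether. Apply $\phi^{-1}$ as in the proof of Theorem \ref{th77} and use Lemma \ref{le888} to transfer the hypotheses: $\phi^{-1}(F)$ has $d_r=z_1\cdots z_{n-1}h$ and all $J_k$ equal to the unit ideal. Theorem \ref{th2.2}, together with the subsequent remark, then gives $\phi^{-1}(F)\sim$ its Smith form. Applying $\phi$ returns unimodular transformations and the Smith form of $F$, exactly as in the last paragraph of the proof of Theorem \ref{th77}.
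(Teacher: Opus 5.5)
Your proposal is correct and matches the paper's argument. The paper omits the proof as analogous to Theorem~\ref{th1.2}: the Lin--Bose/Quillen--Suslin reduction of $F$ to $\mathrm{diag}(F_1',0)$ with $F_1'$ square of determinant $d_r(F)$, followed by Theorem~\ref{th77} in place of Theorem~\ref{th6}. Your explicit treatment of the transposed Lin--Bose step, including the case $r=l$, fills in a detail the paper leaves implicit, and your alternative route via $\phi^{-1}$ and Theorem~\ref{th2.2} is also valid.
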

\begin{proof}
The proof is similar to that of Theorem \ref{th1.2} and is therefore omitted.
\end{proof}

\begin{remark}
It is worth noting that the number of linearly independent linear polynomials in Theorem \ref{th2.23} can be less than $n-1$. In such cases, we only need to embed the coefficient matrix of these linear polynomials into a $(n-1)\times (n-1)$ invertible matrix. This embedding preserves the validity of Lemma \ref{le88}.
\end{remark}


In what follows,  we use an example to illustrate the effectiveness of Theorem \ref{th77}. We shall omit the calculation steps that require the application of Theorem \ref{th66}, since the relevant process has already been illustrated in Example \ref{exa1}.
\begin{example}\label{exa2}
Consider the matrix $F\in \mathbb{Q}[x,y,z]^{3\times 3}$ given by
\[
F=
\begin{pmatrix}
F_{11} & F_{12} & F_{13}\\
F_{21} & F_{22} & F_{23}\\
F_{31} & F_{32} & F_{33}
\end{pmatrix},
\]
where $ \mathbb{Q}$ is the rational number field and
\begin{align*}
F_{11} &= z+1,\quad
F_{12} = x+y,\quad
F_{13}= x^{3}+x^{2}y-xy^{2}-y^{3}-1,\\[4pt]
F_{21} &= x^{3}z -x^{2}yz -x^{2}z^{2}-xy^{2}z +y^{3}z +y^{2}z^{2}+x^{3}-x^{2}y -2x^{2}z -xy^{2}\\ \*
&\quad +xz^{2}+y^{3}+2y^{2}z -yz^{2}-x^{2}+2xz +y^{2}-2yz +z^{2}+x -y +z,\\
F_{22} &= -x^{3}z -x^{2}yz +xy^{2}z +y^{3}z -x^{3}-x^{2}y +x^{2}z +xy^{2}+y^{3}-y^{2}z\\ \*
&\quad +x^{2}-y^{2}+z+1,\\
F_{23} &= -x^{5}z -x^{4}yz +2x^{3}y^{2}z +2x^{2}y^{3}z -xy^{4}z -y^{5}z -x^{5}-x^{4}y +x^{4}z\\ \*
&\quad +2x^{3}y^{2}+2x^{2}y^{3}-2x^{2}y^{2}z -xy^{4}-y^{5}+y^{4}z +x^{4}-2x^{2}y^{2}+y^{4}\\ \*
&\quad +2x^{2}z -2y^{2}z +2x^{2}-xz -2y^{2}+yz -x +y,\\[4pt]
F_{31} &= x^{2}z -xz^{2}-y^{2}z -yz^{2}+x^{2}-3xz -y^{2}-3yz -2x -2y,\\
F_{32} &= -x^{2}z -2xyz -y^{2}z -2x^{2}-4xy -2y^{2},\\
F_{33} &= -x^{4}z -2x^{3}yz +2xy^{3}z +y^{4}z -2x^{4}-4x^{3}y +4xy^{3}+2y^{4}\\ \*&\quad +xz +yz +2x +2y.
\end{align*} 

Similar to Example \ref{exa1}, we first calculation that $d_1(F)=1$, $d_2(F) = z+1$, and $\det(F) = (z + 1)^2(x - y)(x + y)$. Let $g_1 = x-y$ and $g_2 = x+y$ be two linear polynomials. Then the Smith form of $F$ is
$$
S = \left( {\begin{array}{*{20}{c}}
 1 & 0 & 0 \\
0 & z+1 & 0\\
 0 & 0 & g_1g_2(z+1)
\end{array}} \right).
$$

Using {\sc Maple} to compute the Gr\"{o}bner bases of the ideals $J_i(F)$, we further obtain that $J_i(F) = \mathbb{Q}[x,y,z]$, where $i=1,2,3$. In addition, $g_1,g_2$ are linearly independent.
Therefore, by Theorem \ref{th77}, we have $F$ is equivalent to its Smith form $S$.
We construct a homomorphic mapping $\phi$ of $\mathbb{Q}[x,y,z]$ as follows:
\[
\begin{aligned}
\phi :\quad 
x \mapsto g_1,\quad y \mapsto g_2,\quad z \mapsto z.
\end{aligned}
\]

By Lemma \ref{le88}, $\phi$ is an automorphism satisfying \(\phi(x,y,z) = (g_1,g_2,z)\). Also, $\phi^{-1}$ is an automorphism of $\mathbb{Q}[x,y,z]$. Thus, we have
\[
\phi^{-1}(F)=
\begin{pmatrix}
z +1 & y  & xy^{2}-1 \\
F_{21}  & F_{22} & F_{23}\\
 x y z -yz^{2}+x y -3 y z -2 y & -y^{2} z -2 y^{2}& -xy^{3} z -2 xy^{3}+yz +2y 
\end{pmatrix},
\]
where $F_{21} = x^{2} y z -x y z^{2}+x^{2} y -2 x y z +x z^{2}-x y +2 x z +z^{2}+x +z$, $F_{22} = -x y^{2} z -x y^{2}+x y z +x y +z +1$, $F_{23} = -x^{2} y^{3} z -x^{2} y^{3}+x^{2} y^{2} z +x^{2} y^{2}+2 x y z +2 x y -x z -x $.

By Lemma \ref{le888} (verified independently with {\sc Maple}), we have $d_1(\phi^{-1}(F))=1$, $d_2(\phi^{-1}(F)) = z+1$, and $\det(\phi^{-1}(F)) = xy(z + 1)^2$. Following the calculation procedure in Theorem \ref{th66}, we can find two unimodular matrices 
$$
M= \begin{pmatrix}
1 & 0 & 0 \\
 -x y z -x y +x z +x  & 1 & x  \\
 -y z -2 y  & 0 & 1  
\end{pmatrix},  \quad  N = \begin{pmatrix}
z +1 & y  & x \,y^{2}-1 \\
 z  & 1 & x y  \\
 1 & 0 & 0 
\end{pmatrix}
$$
such that $\phi^{-1}(F) = M\phi^{-1}(S)N$, where $\phi^{-1}(S)$ is the Smith form of $\phi^{-1}(F)$. Consequently, we obtain two unimodular matrices $\phi(M), \phi(N)\in \mathbb{Q}[x,y,z]^{3\times 3}$ satisfying $F = \phi(M)\cdot S\cdot \phi(N)$.
\end{example}

\section{Conclusions}
In this paper, we have investigated the equivalence problem for three classes of multivariate polynomial matrices to their Smith forms. 
Let $F \in K[{z_1},{z_2}, \ldots ,{z_n}]^{l \times l}$ and $h\in K[z_n]$. We established a criterion under which a matrix $F$ with $\det(F)=(z_1-f(z_2,\ldots,z_n))h$  is equivalent to its Smith form.
This criterion was subsequently extended to handle matrices whose determinant is of the form $(z_1-f_1(z_2,\ldots,z_n))\cdots (z_{n-1}-f_{n-1}(z_n))h$.
By leveraging the ring isomorphism $\phi$ that we constructed, where \(\phi: K[z_1,z_2,\ldots,z_n] \to K[z_1,z_2,\ldots,z_n]\) is defined by \(\phi(z_i)=g_i\) for \(i=1,\ldots,n-1\) and \(\phi(z_n)=z_n\), we demonstrated that any matrix $F$ with $\det(F)=g_1g_2\cdots g_{n-1}h$ is equivalent to its Smith form, where $g_1,g_2,\ldots, g_{n-1}$ are linearly independent linear polynomials.
The non-square and rank-deficient cases are also handled.
Notably, these criteria can be effectively verified by computing the reduced Gr\"{o}bner bases of the polynomial ideals generated by the reduced minors of the given matrix. 
The ring isomorphism approach is used to analyze the equivalence of polynomial matrices in this paper. Future work may use this approach to focus on more general classes of multivariate polynomial matrices.



\end{document}